\documentclass[12pt]{amsart}
\usepackage[active]{srcltx}
\usepackage{calc,amssymb,amsthm,amsmath,amscd, eucal,ulem,stmaryrd,mathrsfs}
\usepackage{alltt}
\usepackage[left=1.35in,top=1.25in,right=1.35in,bottom=1.25in]{geometry}
\usepackage{mathtools}
\usepackage{soul}
\usepackage{hyperref}

\makeatletter
\def\@tocline#1#2#3#4#5#6#7{\relax
  \ifnum #1>\c@tocdepth 
  \else
    \par \addpenalty\@secpenalty\addvspace{#2}%
    \begingroup \hyphenpenalty\@M
    \@ifempty{#4}{%
      \@tempdima\csname r@tocindent\number#1\endcsname\relax
    }{%
      \@tempdima#4\relax
    }%
    \parindent\z@ \leftskip#3\relax \advance\leftskip\@tempdima\relax
    \rightskip\@pnumwidth plus4em \parfillskip-\@pnumwidth
    #5\leavevmode\hskip-\@tempdima
      \ifcase #1
       \or\or \hskip 1em \or \hskip 2em \else \hskip 3em \fi%
      #6\nobreak\relax
    \hfill\hbox to\@pnumwidth{\@tocpagenum{#7}}\par
    \nobreak
    \endgroup
  \fi}
\makeatother
\usepackage{kmacros3}

\RequirePackage[dvipsnames,usenames]{color}

\input{xy}
\xyoption{all}
\usepackage{tikz}
\usepackage{tikz-cd}
\usetikzlibrary{decorations.markings}
\usetikzlibrary{cd}
\usepackage{mabliautoref}
\usepackage{bm}

\usepackage{pifont}

\DeclareSymbolFont{rsfs}{OMS}{rsfs}{m}{n}
\DeclareSymbolFontAlphabet{\scr}{rsfs}

\newcommand{\cF}{{\mathcal F}}

\newcommand{\cQ}{{\mathcal Q}}

\renewcommand{\fram}{\mathfrak{m}}
\numberwithin{equation}{theorem}

\usepackage{fullpage}

\usepackage{setspace}

\usepackage{enumerate}
\usepackage{enumitem}
\usepackage{graphicx}
\usepackage[all,cmtip]{xy}

\usepackage{upgreek}

\usepackage{bbding}
\usepackage{verbatim}

\renewcommand{\sO}{\mathcal{O}}

\begin{document}

\title{On rational chain connectedness of globally $+$-regular varieties}
\author{Emre Alp \"Ozavc\i, Zsolt Patakfalvi, Kevin Tucker, Joe Waldron and Zheng Xu}

\address{\'Ecole Polytechnique F\'ed\'erale de Lausanne (EPFL), MA C3 615, Station 8, 1015 Lausanne, Switzerland}
\email{emre.ozavci@epfl.ch}
\address{\'Ecole Polytechnique F\'ed\'erale de Lausanne (EPFL), MA C3 635, Station 8, 1015 Lausanne, Switzerland}
\email{zsolt.patakfalvi@epfl.ch}
\address{Department of Mathematics, University of Illinois at Chicago, Chicago, IL 60607, USA}
\email{kftucker@uic.edu}
\address{Department of Mathematics, Michigan State University, East Lansing, MI 48824, USA}
\email{waldro51@msu.edu}
\address{Beijing International Center for Mathematical Research,
Peking University, No. 5 Yiheyuan Road, Haidian District, Beijing 100871, China}
\email{zhengxu@pku.edu.cn}

\begin{abstract}We prove that globally $+$-regular varieties are rationally chain connected in dimension three and mixed characteristic with residue field characteristic $p>5$.  We also introduce a notion of strongly globally $+$-regular, and show that varieties of arbitrary dimension which are strongly globally $+$-regular over a dense open subset of $\Spec(\mathbb{Z})$ are rationally chain connected.\end{abstract}
\maketitle

\section{Introduction}

The geometry of Fano varieties in characteristic zero is well understood, and in particular they are known to be rationally connected by work of Campana \cite{CampanaConnexite}  and Koll\'ar-Miyaoka-Mori \cite{KollarRationalConnecedness} in the smooth case and \cite{ZhangRational} in the mildly singular case. The original proofs do extend in the smooth case to prove that smooth Fano varieties are rationally chain connected over fields of positive characteristic. However, the singular versions are unknown in positive characteristic in dimensions higher than $3$. The reason is that all known proofs in the singular case use Kodaira or Kawamata-Viehweg type vanishing theorems at one point.  In positive characteristic, at least up to dimension $3$ or $4$, we often get around this using the Frobenius morphism  \cite{SmithFujitaFreenessForVeryAmple, HaconXuThreeDimensionalMinimalModel, CasciniTanakaXuOnBasePointFreeness}.  In particular, using Frobenius one can define a more well behaved class of varieties called globally $F$-regular varieties, which in particular are known to be Fano type by \cite{SchwedeSmithLogFanoVsGloballyFRegular}.  In mixed characteristic, the Frobenius morphism is not available, but recently a new class of varieties called globally $+$-regular varieties have been defined in \cite{BMPSTWW1}, which satisfy many of the same good properties as globally $F$-regular varieties.  This theory was used to develop the log minimal model program in dimension three and mixed characteristic with residue characteristic $p>5$.  

However, it is unknown whether globally $+$-regular varieties are of Fano type, and this is a major open problem.  A solution would imply that globally $+$-regular varieties are rationally chain connected, using the characteristic zero results above.  In this paper, we  provide evidence for the conjecture by proving rational chain connectedness of certain classes of globally $+$-regular varieties.  See \cite{KV23} for other such evidence.   Our first main result is that globally $+$-regular threefolds over a mixed characteristic DVR with residue characteristic $p>5$ are rationally chain connected.  This is made possible by the recent advances in the minimal model program for threefolds in mixed characteristic \cite{BMPSTWW1}.    

\begin{theorem}
	Let $(X,\Delta)$ be a projective globally $+$-regular pair surjective over $V=\Spec(R)$  where $R$ is a  DVR of mixed characteristic with residue characteristic $p>5$.
    Suppose 
    $\dim(X)\leq 3$.
    Then $X$ is rationally chain connected over $V$.   
\end{theorem}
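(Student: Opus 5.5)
The plan is to reduce to rational chain connectedness of the fibres, and then to use the three-dimensional mixed characteristic MMP of \cite{BMPSTWW1} to move every curve class into $K$-negative extremal rays. Those rays are spanned by rational curves.

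\textbf{Reduction to the fibres.} Rational chain connectedness over $V$ means that the geometric generic fibre and the closed fibre are rationally chain connected, and this can be checked geometric fibre by geometric fibre. We first record the standard consequences of global $+$-regularity from \cite{BMPSTWW1}. After perturbing $\Delta$ we may assume $(X,\Delta)$ is klt, with $-(K_X+\Delta)$ big, and $X$ normal. The generic fibre $X_K$ has characteristic zero, and there global $+$-regularity implies global $F$-regularity type of all reductions. More concretely, we can replace $\Delta$ by a boundary $\Delta'$ making $(X_K,\Delta'_K)$ klt log Fano, via the usual argument that $-(K+\Delta)$ is big and a suitable complement exists. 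Then \cite{ZhangRational} shows $X_{\bar K}$ is rationally connected. The real content is therefore the closed fibre $X_k$, a surface or curve of dimension $\le 2$ over a field of characteristic $p>5$, which may be reducible and nonreduced.

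\textbf{Closed fibre via the MMP.} Fix a point $x\in X_k$. The aim is to show that $X_k$ is connected by chains of rational curves. Choose an ample $H$ and run the $(K_X+\Delta)$-MMP over $V$, available by \cite{BMPSTWW1} for $p>5$. Global $+$-regularity is preserved under the steps, since it descends along birational contractions and pushforwards. Moreover $K_X+\Delta$ is not pseudoeffective, because $-(K_X+\Delta)$ is big. The MMP therefore terminates in a Mori fibre space $X'\to Z$ over $V$, and $(X',\Delta')$ is again globally $+$-regular.

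Every exceptional locus contracted by a divisorial contraction or flip is covered by rational curves: the cone/contraction theorem in \cite{BMPSTWW1} gives extremal rays spanned by rational curves, and the fibres of the contractions have dimension $\le 2$ and are rationally chain connected by bend-and-break over the residue field. Chain connectedness of the closed fibre of $X$ thus follows from that of $X'$ by induction along the MMP. For $X'\to Z$, the general fibres are rationally chain connected because they are $-K$-ample of dimension $\le 2$, hence del Pezzo or $\mathbb{P}^1$-type in characteristic $p$. The base $Z$ is again globally $+$-regular of smaller dimension, by the descent of global $+$-regularity to the base of a contraction. Induction on dimension then makes $Z_k$ rationally chain connected, and chains lift, using that rational curves in $Z_k$ lift to chains in $X'_k$ through the fibration, by the Graber–Harris–Starr-type argument or directly in dimension $\le 3$.

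\textbf{Main obstacle.} The hardest step is the fibration step in the closed fibre: lifting chains from $Z_k$ to $X'_k$ when the fibres over special points of $Z_k$ are degenerate, and making the inductive hypothesis apply to $Z$. I would first try to use the relative cone theorem to produce rational curves in every fibre, and to use the properness of the relative Chow/Hom scheme to specialize chains from the generic fibre $X_{\bar K}$ to $X_k$. Indeed, rational chain connectedness specializes in proper flat families, since limits of rational chains are rational chains. This would remove the need for the inductive lifting entirely, giving the closed fibre from the generic fibre.
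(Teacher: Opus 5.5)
Your proposal has a genuine gap at its foundation. You assert that, after perturbation, $-(K_X+\Delta)$ is big and $(X_K,\Delta'_K)$ can be made klt log Fano. This is not known. Whether globally $+$-regular pairs are of log Fano type is precisely the open problem described in the introduction (cf.\ \cite[Conjecture 6.17]{BMPSTWW1}). If it were available, the theorem would follow at once from \cite{ZhangRational} and specialization, with no restriction on dimension. The appeal to ``global $F$-regularity type of all reductions'' does not apply either. $X$ lives over a single DVR with one residue characteristic, so there is no family of reductions modulo many primes, and global $+$-regularity of $X$ says nothing directly about global $F$-regularity of $X_k$. Consequently your claims that $K_X+\Delta$ is not pseudo-effective and that the MMP ends in a Mori fibre space are unsupported. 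Establishing exactly this is the substantive content of the paper's proof; compare \autoref{uniruled}, where the analogous step in arbitrary dimension needs either the strong hypothesis or the non-vanishing conjecture.

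The missing idea is to show directly that the $K_X$-MMP over $V$ cannot end with a model $Y$ having $K_Y$ nef. Here one first drops $\Delta$ and takes a small $\mathbb{Q}$-factorialization, both of which preserve global $+$-regularity. If $K_Y$ were nef, abundance for mixed characteristic threefolds \cite{bernasconi2024abundance} would make $K_Y$ semiample. The covering trick then gives a finite cover $f\colon W\to Y$ with $(f^*\omega_Y)^{**}$ a semiample line bundle. A further finite cover makes the pullback $H^i(Y_{p=0},\omega_Y)\to H^i(W_{p=0},(f^*\omega_Y)^{**})$ zero for $i>0$ \cite[Proposition 3.1]{BMPSTWW1}. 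The splitting provided by global $+$-regularity then forces $H^i(Y_{p=0},\omega_Y)=0$. Hence $H^i(Y,\omega_Y)$ is a $p$-divisible finitely generated $R$-module, so it vanishes, contradicting duality for $i=\dim Y-1$. This yields a Mori fibre space $Y\to Z$ with $Z$ globally $+$-regular of dimension at most $2$, so induction applies to $Z$. The generic fibre of $Y\to Z$ is a klt Fano variety over a field of characteristic zero, and \cite{HM07} then makes the generic fibre of $X\to V$ rationally chain connected.

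Your closing specialization idea is correct, and it is exactly how the paper reaches the closed fibre (\autoref{lem:rational_chain_closed}). Once you adopt it, your whole ``closed fibre via the MMP'' section becomes unnecessary, including the chain-lifting through a fibration in characteristic $p$, where Graber--Harris--Starr-type arguments are unavailable. But then all the work sits on the generic fibre, which is precisely where your proposal assumes what has to be proved.
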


Here being rationally chain connected over $V$ means that each fiber over $V$ is rationally chain connected, or equivalently that the generic fiber over $V$ is rationally chain connected (\autoref{lem:rational_chain_closed}). Note that the notion of absolute rational chain connectedness does not make sense in our situation, as the base $V$ is a mixed characteristic DVR and hence one cannot make sense of the notion of rational curve in the horizontal direction. 

Since globally $+$-regular pairs have klt singularities \cite{BMPSTWW1}, it follows from \cite{HM07} that the generic fiber is rationally connected. 

While globally $+$-regular varieties have many of the same properties as globally $F$-regular varieties, they are not known to be equivalent even over a field of characteristic $p>0$, and it is unknown whether globally $+$-regular varieties are rationally chain connected even in this case.   
In this direction, we also prove rational chain connectedness of globally $+$-regular varieties of relative dimension at most three over an excellent base of positive characteristic with dualizing complex, generalizing \cite{gongyo_rational_2015}.

\begin{theorem}
Let $(X,\Delta)$ be a projective globally $+$-regular pair surjective over $V$  where $V$ is a positive characteristic excellent scheme with dualizing complex and residue characteristics $p\geq 11$, and $\dim(X)-\dim(V)\leq 3$.
Then $X$ is rationally chain connected over $V$.  
\end{theorem}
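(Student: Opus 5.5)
We reduce to the case where the base is the spectrum of a field, and then use the positive characteristic minimal model program in dimension at most three.

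\textbf{Step 1: reduction to fibers.} Rational chain connectedness over $V$ means every fiber over a point of $V$ is rationally chain connected. Since rational chain connectedness specializes in proper families (\autoref{lem:rational_chain_closed}), it suffices to treat the generic fibers over the irreducible components of $V$. After localizing, we may assume $V=\Spec(R)$ with $R$ a local domain, and pass to the generic point $\eta$ with residue field $K$. Global $+$-regularity is preserved under localization of the base. By the standard arguments of \cite{BMPSTWW1}, it also passes to the generic fiber $(X_\eta,\Delta_\eta)$, viewed as a projective variety over $K$ of dimension at most three. Here $K$ is an $F$-finite-type field of characteristic $p\ge 11$, possibly imperfect, and $X_\eta$ need not be geometrically reduced. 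Rational chain connectedness over a field is a geometric notion, so we must also be able to pass to $\overline K$.

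\textbf{Step 2: from globally $+$-regular to a Fano-type structure.} Globally $+$-regular pairs are klt, and $-(K_X+\Delta)$ is big: one can find an effective $B\ge \Delta$ such that $(X,B)$ is klt and $-(K_X+B)$ is ample up to a small perturbation. More precisely, global $+$-regularity gives splittings of $\mathcal O_X\to f_*\mathcal O_Y(f^*D)$ for every effective $D$. Via the standard argument, this yields that $-(K_X+\Delta)$ is big and that some klt complement exists, so $X$ is of log Fano type. If this Fano-type statement is not available directly, I would instead run a $K_X$-MMP, which terminates in relative dimension at most three by the positive characteristic MMP of \cite{HaconXuThreeDimensionalMinimalModel} and its extensions to imperfect fields and $p\ge 11$ (hence the bound on $p$). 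Global $+$-regularity persists along MMP steps and Mori fibre spaces $X'\to Z$, and prevents $K_X$ from being pseudoeffective, since a splitting would force $H^0$ of a positive multiple of $-K$ to be nonzero. So the MMP ends in a Mori fibre space.

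\textbf{Step 3: rational chain connectedness of the output.} On a Mori fibre space $X'\to Z$, the general fibers are log Fano of dimension at most three, and $Z$ is again globally $+$-regular of smaller dimension. Induction on dimension gives that $Z$ is rationally chain connected. The fibers are rationally chain connected by the cone theorem: bend-and-break produces rational curves through every point for klt log Fano varieties over a perfect field. Combining these gives rational chain connectedness of $X'$; this is the argument of \cite{gongyo_rational_2015}, together with lifting chains along the fibration. Finally, we pull back to $X$. Steps of the MMP are birational with rationally chain connected fibers of the exceptional loci, since extremal contractions are covered by rational curves by the cone theorem. Hence $X$ is rationally chain connected as well.

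\textbf{Main obstacle.} The hardest step is the base change to the geometric generic fiber. Over the imperfect field $K$, the fiber $X_{\overline K}$ may be non-normal or even non-reduced, and global $+$-regularity does not obviously descend to its normalization. I would first try to run the whole MMP argument over $K$ itself, using the imperfect-field MMP with $p\ge 11$ to control the geometric pathologies of Mori fibre spaces in low dimension. Rational curves would be produced by bend-and-break on $X_{\overline K}^{\rm red}$, which still carries an ample anti-canonical-type divisor up to conductor corrections.
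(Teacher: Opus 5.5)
Your outline (reduce to the generic fibre, run a $K_X$-MMP to a Mori fibre space, conclude by induction) has the same shape as the paper's argument. However, the two steps that carry the content are not supplied.

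\textbf{Step 2.} The claim that a ``standard argument'' makes a globally $+$-regular pair of log Fano type is not available: this is exactly the open problem the paper is providing evidence for. Your fallback needs $K_X$ not pseudo-effective, and your justification does not give it. A splitting (e.g.\ of Frobenius) only shows that $-K_X$ is $\mathbb{Q}$-effective. Combined with pseudo-effectivity of $K_X$, this forces $K_X\sim_{\mathbb{Q}}0$ but does not contradict it; ordinary abelian varieties are globally $F$-split with $K\sim 0$. What is missing is $\kappa(X)=-\infty$. That uses splittings of \emph{all} finite covers, combined with killing $H^{\dim X}(\sO_X)$ by a finite cover (\cite[Theorem 9.1]{KV23}; cf.\ \autoref{top_cohomology_vanish}). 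The paper then obtains non-pseudo-effectivity from non-vanishing for threefolds \cite[Theorem 1.1]{XuLeiNonvanishing}. Alternatively, $\kappa=-\infty$ together with $\mathbb{Q}$-effectivity of $-K_X$ \cite[Lemma 9.2]{KV23} would suffice.

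\textbf{Step 3.} This is where positive characteristic actually bites, and the argument you give does not work there.
\begin{itemize}
\item Bend-and-break only gives a rational curve through every point (uniruledness), not chains joining two general points.
\item Rational chain connectedness of singular log Fano varieties is not known in general in characteristic $p$. Even that the general fibres of a Mori fibre space are normal or klt is not automatic.
\item Lifting chains from the base needs a separable rational connectedness input (de Jong--Starr type), which fails without control of the geometric generic fibre.
\end{itemize}
The paper, following \cite{gongyo_rational_2015}, first reduces to a $\mathbb{Q}$-factorial terminal threefold over an uncountable algebraically closed field. It does this via a small $\mathbb{Q}$-factorialization and a crepant terminalization \cite{Birkar16}, using \autoref{prop:flip_G+R} and \autoref{lem:crepant_g+r}. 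It then treats the Mori fibre space $X\to Y$ case by case:
\begin{itemize}
\item $\dim Y=0$: by \cite[Proposition 3.4]{gongyo_rational_2015}.
\item $Y\cong\mathbb{P}^1$: by \cite[Corollary 1.7]{patakfalvisingularities}. This is exactly where $p\geq 11$ is needed; the threefold MMP itself only needs $p>5$, so your attribution of the bound is off.
\item $\dim Y=2$: by checking that the geometric generic fibre is a smooth conic and applying \cite[Proposition 4.8]{gongyo_rational_2015}.
\end{itemize}

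\textbf{The obstacle you single out.} It is not overcome by an imperfect-field MMP. The key input is \cite[Proposition 5.14, Corollary 5.15]{KV23} (adapted to pairs), which says global $+$-regularity is preserved under base change to $\overline{K}$. Hence $X_{\overline K}$, and later $X_{\bar\eta}$, is normal and globally $+$-regular. This is used together with invariance of rational chain connectedness under field extension. Without it, none of the algebraically-closed-field results above apply, and ``bend-and-break on $X^{\mathrm{red}}_{\overline K}$ up to conductor corrections'' is not an argument.
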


Finally, we introduce a variation of globally $+$-regular varieties which we call strongly globally $+$-regular varieties.  
This builds a perturbation into the definition, analogous to that which was used in the local case in \cite{BMPSTWW2} to unify the theory of mixed characteristic test ideals. This class is equivalent to globally $F$-regular varieties over a field of characteristic $p>0$, and we prove rational chain connectedness in arbitrary dimensions for varieties which are strongly globally $+$-regular over a dense open set of $\Spec(\mathbb{Z})$.

\begin{theorem}\label{main_thm_dense} 
    Let $Z$ be a normal, integral, excellent scheme with a dualizing complex, which is of mixed characteristic and has dense closed points.  Let $g:X\to Z$ be a surjective and projective morphism, such that  $X$ is strongly globally $+$-regular. Then $X$ is rationally chain connected over $Z$.
\end{theorem}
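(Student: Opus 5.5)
Since rational chain connectedness over $Z$ can be checked on the generic fiber (\autoref{lem:rational_chain_closed}, together with the fact that rational chain connectedness specializes in proper families), the plan is to prove it on fibers over a dense set of closed points of $Z$ and then spread it to the generic fiber. The first step is to show that strong global $+$-regularity of $X$ restricts well to fibers. For a closed point $z\in Z$ of residue characteristic $p>0$, I would show that the fiber $X_z$, or a suitable normalization of a component of it, is globally $F$-regular. The heuristic is that the perturbation built into the definition of strongly globally $+$-regular plays the role of the ``$+\epsilon$'' in the adjunction/inversion arguments of \cite{BMPSTWW2}. Concretely, I would use the paper's comparison result that, over a field of characteristic $p$, strongly globally $+$-regular is equivalent to globally $F$-regular. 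I would combine it with a $+$-adjunction statement: for $Z$ local at $z$ and $f_1,\dots,f_d$ a system of parameters cutting out $z$, strong global $+$-regularity of $(X, \sum \epsilon\, \mathrm{div}(f_i))$-type perturbations should descend to $X_z$.

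To do this I would cut down one divisor at a time. Choose $f\in \fram_z$ such that $\mathrm{div}(f)$ is integral near the general point. Then use the surjectivity of $H^0(X,\sO_X(\ldots))\to H^0(X^+,\ldots)$-type trace maps to pass to the Cartier divisor $D=g^*\mathrm{div}(f)$, giving that $D$ is (strongly) globally $+$-regular. The perturbation ensures the relevant divisor can be taken with coefficient $<1$ nearby, which makes the adjunction step possible. Iterating this yields a globally $F$-regular structure on the fiber over a dense set of closed points $z$ in the regular locus of $Z$, using density of closed points and generic normality and reducedness of fibers.

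Next, by \cite{SchwedeSmithLogFanoVsGloballyFRegular}, a globally $F$-regular projective variety over a field is of Fano type. Hence $X_{\bar z}$ (globally $F$-regular is preserved under separable base field extension) admits $\Delta_z$ with $(X_{\bar z},\Delta_z)$ klt and $-(K+\Delta_z)$ ample. Rational chain connectedness of the fiber then follows either from Gongyo et al.\ \cite{gongyo_rational_2015}, where globally $F$-regular implies rationally chain connected via Frobenius-based vanishing and bend-and-break, or directly from the fact that such fibers are covered by rational curves and chain-connected by those curves.

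Finally, the rational chain connected fibers lie over a dense set of points. Lemma \autoref{lem:rational_chain_closed}, or the standard fact that rational chain connectedness is a closed condition on the base of a proper family (via Hilbert schemes of chains of rational curves being proper), then gives it at the generic point and so over all of $Z$.

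The main obstacle is the first step: proving that strong global $+$-regularity restricts to globally $F$-regular fibers over closed points. This needs a global $+$-adjunction along a non-Cartier-free situation, and care with nonreduced or reducible fibers. I expect to handle it by working only over a dense open of $Z$ where $g$ is flat with geometrically normal fibers, and by applying the perturbation with a small multiple of the fiber divisor.
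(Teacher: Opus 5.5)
Your proposal has genuine gaps at all three of its steps.

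\textbf{Step (i), restriction to fibres.} You flag this step yourself, but there is no mechanism behind it. Strong global $+$-regularity only gives that $(X,\epsilon D)$ is globally $+$-regular for some small $\epsilon>0$. An adjunction to $D=g^*\mathrm{div}(f)$ would need $D$ with coefficient one, i.e.\ a purity statement for $(X,D)$, and that does not follow. Already in characteristic $p$, a Cartier divisor on a globally $F$-regular variety need not be $F$-regular. The adjunction results of \cite{BMPSTWW1} (e.g.\ Theorem 7.2, used in the examples section) go the other way, from a fibre to the total space. Producing globally $F$-regular closed fibres from the total space alone is essentially as hard as the Fano-type conjecture.

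\textbf{Step (ii), from globally $F$-regular to rationally chain connected.} Even granting (i), ``globally $F$-regular implies rationally chain connected'' is only known for threefolds with $p\geq 11$ \cite{gongyo_rational_2015}. For singular varieties of Fano type in positive characteristic and dimension above three it is open. This is exactly the obstruction described in the introduction. Saying the fibres are ``covered by and chain-connected by rational curves'' is the statement to be proved, not a known fact.

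\textbf{Step (iii), passing to the generic fibre.} This step runs in the wrong direction. \autoref{lem:rational_chain_closed} passes from the generic fibre to every fibre. The rationally chain connected locus is only a countable union of closed subsets \cite[IV.3.5]{KollarRationalCurves}, not a closed set. When $Z$ is of finite type over $\mathbb{Z}$ it has only countably many points, so rational chain connectedness of a dense set of closed fibres tells you nothing about $X_\eta$.

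\textbf{The missing idea.} Stay on the characteristic-zero generic fibre, where the MMP exists, and use mixed characteristic only to produce a vanishing theorem. In the paper, \autoref{top_cohomology_vanish} shows $H^d(X_\eta,\mathcal{F}_\eta)=0$ for every $\mathbb{Q}$-Cartier pseudo-effective divisorial sheaf $\mathcal{F}\cong\mathcal{O}_X(G)$. The argument runs as follows.
\begin{enumerate}
\item Perturbing by $\epsilon A$ with $A$ ample makes $G+\epsilon A$ $\mathbb{Q}$-linearly equivalent to an effective divisor; this is exactly what replaces non-vanishing.
\item A covering trick turns this into an effective Cartier divisor.
\item Top cohomology on the fibre over a closed point is killed by a further finite cover \cite[Proposition 3.1(a)]{BMPSTWW1}.
\item The splitting, Nakayama and flat base change then give the vanishing on $X_\eta$.
\end{enumerate}
Applied to $\omega_X$ after a crepant small $\mathbb{Q}$-factorialization, Serre duality shows $K_{X_\eta}$ is not pseudo-effective. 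So a $K_{X_\eta}$-MMP \cite{BCHM10} ends in a Mori fibre space, which spreads out over an open subset of $Z$. Its base is again strongly globally $+$-regular, so induction on dimension together with \cite{HM07} shows that $X_\eta$ is rationally chain connected. Only then is \autoref{lem:rational_chain_closed} applied, in the generic-to-special direction it actually proves.
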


Assuming the non-vanishing conjecture in characteristic zero (\autoref{non-vanishing_conjecture}), we also obtain the same statement for globally $+$-regular varieties.  This implies the analogous result in relative dimension three, where non-vanishing is known.

\subsection*{Acknowledgements}
The authors would like to thank Bhargav Bhatt, Hanlin Cai, Linquan Ma, Karl Schwede, Zhiyu Tian and Jakub Witaszek for helpful conversations related to this work.  

\"Ozavcı and Patakfalvi were supported by the ERC starting grant \#804334 and by the Swiss National Science Foundation grant (FNS project grant) \#200021-
231484.

Tucker was supported in part by NSF Grants \#2200716, 2501904 and Simons Foundation Travel Support for Mathematicians SFI-MPS-TSM-00014083.

Waldron was supported by NSF CAREER Grant DMS \#2440240, NSF Grant DMS \#2401279 and the Simons Foundation Gift ID \#850684

\section{Preliminaries}

\subsection{Conventions}

All schemes we consider will be excellent with dualizing complex.   A pair $(X,\Delta)$ will consist of a $\mathbb{Q}$-divisor $\Delta\geq 0$ such that $K_X+\Delta$ is $\mathbb{Q}$-Cartier.  For notions of birational geometry such as klt etc, see \cite{KollarMori} and \cite{KollarKovacsSingularitiesBook}.

\subsection{Globally $+$-regular varieties}

Here we recall the definition and basic properties of globally $+$-regular varieties.

\begin{definition}\cite[Definition 6.1]{BMPSTWW1}
 A pair $(X,\Delta)$ is globally $+$-regular if for every finite dominant map $f:Y\to X$ with $Y$ normal, the map $\sO_X\to f_*\sO_Y(\lfloor f^*\Delta\rfloor)$ splits as a map of $\sO_X$-modules.  
\end{definition}

\begin{lemma}\cite[Lemma 5.1]{KV23}\label{lem:image_g+r}
    Let $X$ be a globally $+$-regular variety and $f:X\to Y$ a projective morphism such that $f_*\sO_X=\sO_Y$.  Then $Y$ is globally $+$-regular.  
\end{lemma}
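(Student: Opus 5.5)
To show $Y$ is globally $+$-regular, I have to split $\sO_Y\to h_*\sO_W$ for an arbitrary finite dominant morphism $h:W\to Y$ with $W$ normal (here $\Delta=0$). The plan is to pull $h$ back to $X$, split there using global $+$-regularity of $X$, and then push the splitting down along $f$.

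\textbf{Step 1 (the cover of $X$).} Form the fibre product $X\times_Y W$. It is finite over $X$, but it may be reducible or non-reduced, and it need not be normal. To fix this, choose an irreducible component dominating $X$ and let $X'$ be the normalization of that component with its reduced structure. Then $X'\to X$ is finite and dominant. Two points need care here:
\begin{itemize}
\item A dominating component exists because $f$ is surjective, which follows from $f_*\sO_X=\sO_Y$ and properness.
\item Finiteness of the normalization uses excellence.
\end{itemize}
This gives a commutative square with $g:X'\to X$ finite, $f':X'\to W$, and $h\circ f'=f\circ g$.

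\textbf{Step 2 (splitting on $X$).} By global $+$-regularity of $X$, the map $\sO_X\to g_*\sO_{X'}$ has an $\sO_X$-linear splitting $\phi:g_*\sO_{X'}\to\sO_X$.

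\textbf{Step 3 (push down along $f$).} Apply $f_*$ and use $f_*\sO_X=\sO_Y$. This gives $\sO_Y$-linear maps
$$\sO_Y\to f_*g_*\sO_{X'}\xrightarrow{f_*\phi}\sO_Y,$$
whose composite is the identity. The first map factors as $\sO_Y\to h_*\sO_W\to h_*f'_*\sO_{X'}=f_*g_*\sO_{X'}$, where the second arrow is $h_*$ of $\sO_W\to f'_*\sO_{X'}$. Hence the composite
$$h_*\sO_W\to f_*g_*\sO_{X'}\xrightarrow{f_*\phi}\sO_Y$$
is an $\sO_Y$-linear map splitting $\sO_Y\to h_*\sO_W$. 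This is the required splitting.

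The main obstacle is Step 1: producing a normal finite cover of $X$ that dominates $X$ and maps to $W$. Once that cover exists, the rest is formal. Nothing beyond $f_*\sO_X=\sO_Y$ is needed in Step 3; projectivity of $f$ is only used to guarantee that $Y$ is a reasonable scheme and that $f_*$ behaves well.
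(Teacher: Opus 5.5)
Your proof is correct. The paper gives no proof of its own here, only the citation \cite[Lemma 5.1]{KV23}, and your argument (pull the cover back to $X$, normalize a dominating component, then push the splitting forward using $f_*\sO_X=\sO_Y$) is the standard proof of this fact. Nothing in it is specific to varieties over a field, so it also covers the mixed-characteristic settings where the paper applies the lemma.

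One small correction in Step 1 concerns why a component of $X\times_Y W$ dominating $X$ exists. The reason is that $h$ is finite and dominant, hence surjective, so its base change $X\times_Y W\to X$ is finite and surjective onto the irreducible $X$. Surjectivity of $f$ plays no role there.
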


\begin{proposition}\cite[Proposition 6.19]{BMPSTWW1}
Let $(X,\Delta)$ be a globally $+$-regular pair. If $f : X \to Y$ is a proper birational morphism then  $(Y, f_*\Delta)$ is globally $+$-regular.
\end{proposition}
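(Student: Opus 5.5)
We must show: given a finite dominant $g:W\to Y$ with $W$ normal, the map $\sO_Y\to g_*\sO_W(\lfloor g^*f_*\Delta\rfloor)$ splits. The plan is to lift $g$ to a finite cover of $X$, use the splitting on $X$, and push it forward along $f$, exploiting $f_*\sO_X=\sO_Y$ (which holds because $Y$ should be taken normal; otherwise replace $Y$ by its normalization, through which $f$ factors since $X$ is normal).

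\textbf{Step 1: lifting the cover.} Let $X'$ be the normalization of $X$ in the function field $K(W)$; this is the normalization of the main component of $X\times_Y W$. Then $h:X'\to X$ is finite dominant with $X'$ normal, and there is a proper birational $f':X'\to W$ with $g\circ f'=f\circ h$. Since $W$ is normal, $f'_*\sO_{X'}=\sO_W$.

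\textbf{Step 2: comparing divisors.} On the open set $U\subseteq Y$ over which $f$ is an isomorphism, whose complement has codimension $\ge 2$ since $Y$ is normal, we have $f_*\Delta|_U=\Delta|_U$. Over $g^{-1}(U)$ we also have $h^*\Delta=f'^*g^*(f_*\Delta)$ there, so $f'_*\lfloor h^*\Delta\rfloor$ and $\lfloor g^*f_*\Delta\rfloor$ agree off codimension two on $W$. Since $\sO_W(D)$ is reflexive on normal $W$, pushing forward sections gives
\[
f'_*\sO_{X'}(\lfloor h^*\Delta\rfloor)\subseteq \sO_W(\lfloor g^*f_*\Delta\rfloor).
\]
Here a rational function regular with poles bounded by $\lfloor h^*\Delta\rfloor$ on $X'$ has poles bounded on $W$ by the pushed-forward divisor, which agrees with $\lfloor g^*f_*\Delta\rfloor$ in codimension one.

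\textbf{Step 3: pushing the splitting.} Global $+$-regularity of $(X,\Delta)$ gives a splitting $\phi:h_*\sO_{X'}(\lfloor h^*\Delta\rfloor)\to\sO_X$. Apply $f_*$: $f_*h_*=g_*f'_*$, so $f_*\phi$ is a map $g_*f'_*\sO_{X'}(\lfloor h^*\Delta\rfloor)\to f_*\sO_X=\sO_Y$ splitting $\sO_Y\to g_*f'_*\sO_{X'}(\lfloor h^*\Delta\rfloor)$. It remains to extend this to the larger sheaf $g_*\sO_W(\lfloor g^*f_*\Delta\rfloor)$.

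The inclusion in Step 2 is an equality over $U$, so $f_*\phi$ defines a map on $U$ from $g_*\sO_W(\lfloor g^*f_*\Delta\rfloor)|_U$ to $\sO_U$. Both $g_*\sO_W(\lfloor g^*f_*\Delta\rfloor)$ and $\sO_Y$ are reflexive ($S_2$) on normal $Y$, being pushforwards of reflexive sheaves under a finite map. A map of reflexive sheaves defined off codimension two extends uniquely, so this map extends to all of $Y$. The splitting identity holds on $U$, hence everywhere.

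\textbf{Main obstacle.} The delicate point is the divisor bookkeeping in Step 2, and making sure that the exceptional locus and the codimension-two extension are handled without implicitly assuming that $f_*\Delta$ is $\mathbb{Q}$-Cartier. Working only over $U$ avoids pulling back $f_*\Delta$ globally, and reflexivity finishes the argument.
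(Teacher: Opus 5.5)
Your argument is correct. The paper gives no proof of this statement and simply quotes \cite[Proposition 6.19]{BMPSTWW1}. Your route is essentially the one the paper uses for small maps in Lemma~\ref{prop:flip_G+R}:
\begin{enumerate}
\item normalize $X$ in $K(W)$;
\item restrict the splitting to the open set $U\subseteq Y$ over which $f$ is an isomorphism;
\item extend across the codimension-two complement, since maps into $\sO_Y$ extend over big open subsets of a normal scheme.
\end{enumerate}

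Your Step 3 only ever uses $(f_*\phi)|_U$, which is just $\phi|_{f^{-1}(U)}$ under $f^{-1}(U)\cong U$. So the global push-forward, the identity $f_*\sO_X=\sO_Y$, and the global inclusion of Step 2 can all be dropped, and with them the divisor bookkeeping you flagged as the main obstacle. What makes the non-small case work is that $U$ is big in $Y$; it does not matter that $f^{-1}(U)$ is not big in $X$.

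You use one fact without stating it: $f'$ is an isomorphism over $g^{-1}(U)$. This holds because normalization in $K(W)$ commutes with restriction to opens, and the normalization of $f^{-1}(U)\cong U$ in $K(W)$ is $g^{-1}(U)$.

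Normality of $Y$ is implicit in the statement, since $(Y,f_*\Delta)$ is a pair. You should assume it outright rather than arrange it by passing to the normalization, which would prove the statement for a different scheme.
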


\begin{lemma}\label{prop:flip_G+R}
Let $(X,\Delta)$ be a globally $+$-regular pair which is projective over an excellent DVR $R$. Let $\phi: X\dashrightarrow Y$ be a small birational map, i.e. one which is an isomorphism in codimension two.  Then $(Y,\phi_*\Delta)$ is globally $+$-regular.
\end{lemma}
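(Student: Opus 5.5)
The natural plan is to reduce to the birational-morphism case, Proposition 6.19 of \cite{BMPSTWW1}, by passing through a common resolution-free model: the normalized graph. Write $W$ for the normalization of the closure of the graph of $\phi$, with projections $p\colon W\to X$ and $q\colon W\to Y$; both are projective birational morphisms. The difficulty is that global $+$-regularity descends along birational morphisms but does not a priori ascend. So I will not try to show that $W$ carries a globally $+$-regular pair. Instead I will work directly with finite covers of $Y$ and use that $\phi$ is an isomorphism in codimension one. In fact only codimension one matters for divisorial sheaves, and the stronger codimension-two hypothesis is not needed.

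Let $g\colon Y'\to Y$ be a finite dominant map with $Y'$ normal. Let $X'$ be the normalization of $X$ in the function field $K(Y')$, and let $f\colon X'\to X$ be the induced finite dominant map. Then $\phi$ lifts to a birational map $\phi'\colon X'\dashrightarrow Y'$. The key steps are the following.

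First, I will show that $\phi'$ is also an isomorphism in codimension one. If $U\subseteq X$ and $U'\subseteq Y$ are big opens identified by $\phi$, then $f^{-1}(U)$ and $g^{-1}(U')$ are both the normalization of the same scheme in the same field, hence isomorphic. Their complements have codimension at least two because $f$ and $g$ are finite.

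Second, I will compare $f_*\sO_{X'}(\lfloor f^*\Delta\rfloor)$ with $g_*\sO_{Y'}(\lfloor g^*\phi_*\Delta\rfloor)$. Both are reflexive rank-one-type sheaves, namely $S_2$ divisorial sheaves pushed forward along finite maps, on normal schemes. They agree on the big opens $U\cong U'$ because $\phi_*\Delta$ corresponds to $\Delta$ there. Reflexive sheaves on a normal scheme are determined by their restriction to a big open, so the map $\sO_Y\to g_*\sO_{Y'}(\lfloor g^*\phi_*\Delta\rfloor)$ is $j_*$ of the map $\sO_U\to f_*\sO_{X'}(\lfloor f^*\Delta\rfloor)|_U$, where $j$ is the inclusion of the big open.

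Third, I will transport the splitting. A splitting $s$ of $\sO_X\to f_*\sO_{X'}(\lfloor f^*\Delta\rfloor)$ restricts to a splitting over $U$. Applying $j_*$ on the $Y$ side, and using $j_*\sO_{U'}=\sO_Y$ by normality, gives the required splitting on $Y$.

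The main obstacle is the second step: checking carefully that the target sheaf is reflexive, so that it equals $j_*$ of its restriction, and that the round-downs match on the big open. For reflexivity, $g_*$ of a reflexive sheaf under a finite map of normal schemes is $S_2$, and together with torsion-freeness that gives reflexive. The projectivity over the DVR $R$ only ensures that everything is excellent and of finite type, so that normalizations are finite. Beyond that it plays no further role, and it would have been needed only if one went through the graph $W$ and Proposition 6.19 of \cite{BMPSTWW1} instead.
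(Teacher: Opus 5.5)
Your proposal is correct and is essentially the paper's argument. Take a normal finite cover of $Y$ and normalize $X$ in the same function field. Then restrict the splitting to the big open set where $\phi$ is an isomorphism, and extend it back over $Y$ by reflexivity and normality; this is the Brion--Kumar restriction/extension trick the paper cites. Your extra checks, namely that $g_*\sO_{Y'}(\lfloor g^*\phi_*\Delta\rfloor)$ is reflexive, that codimension is preserved under finite maps, and that only codimension one matters, just fill in details the paper leaves implicit.
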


\begin{proof}
  Let $U$ be the largest big open set on which $X\dashrightarrow Y$ is an isomorphism.  Then for any normal finite cover $W\to X$, there is a splitting $\sO_X\to \phi_*\sO_W(\lfloor\phi^*\Delta\rfloor)\to \sO_X$.  Exactly as in \cite[Lemma 1.1.7]{BrionKumarFrobeniusSplitting}, this restricts to a splitting over $U$, and that splitting extends to a unique splitting for $W'\to Y$ where $W'$ is the normalization of $Y$ in the function field $K(W)$.  It follows that $(Y,\Delta_Y)$ is globally $+$-regular since any normal finite cover of $Y$ arises from one of $X$ in this way.
\end{proof}

\begin{lemma}\label{lem:crepant_g+r}\cite[6.28]{BMPSTWW1}
Let $f:Y\to X$ be a proper birational morphism between normal schemes.  Let $\Delta\geq 0$ be a $\mathbb{Q}$-divisor such that $(X,\Delta)$ is globally $+$-regular.  Suppose we have $\Delta_Y\geq 0$ such that $K_Y+\Delta_Y=f^*(K_X+\Delta)$.  Then $(Y,\Delta_Y)$ is globally $+$-regular.  
\end{lemma}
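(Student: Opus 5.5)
The idea is to transport splittings from $X$ to $Y$ through the birational identification of function fields, using the crepant condition $K_Y+\Delta_Y=f^*(K_X+\Delta)$ to compare the relevant divisorial sheaves. Note that $\Delta_Y$ may have negative coefficients only a priori; here we assume $\Delta_Y\geq 0$, and $f_*\Delta_Y=\Delta$. Let $g:W\to Y$ be a finite dominant map with $W$ normal. Let $W'$ be the normalization of $X$ in $K(W)$, with $h:W'\to X$. The Stein factorization of $W\to X$ gives a proper birational map $\pi:W\to W'$ with $h\circ\pi=f\circ g$. By hypothesis $\sO_X\to h_*\sO_{W'}(\lfloor h^*\Delta\rfloor)$ splits. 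We want to produce a splitting of $\sO_Y\to g_*\sO_W(\lfloor g^*\Delta_Y\rfloor)$.

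The cleanest route is via duality and canonical divisors. Pick $K_W=g^*(K_Y+\Delta_Y)-g^*\Delta_Y+\text{(ramification)}$; more precisely, choose canonical divisors with $K_W=g^*K_Y+R_g$ and $K_{W'}=h^*K_X+R_h$, where these agree on $W$ (pushforward) since $K(W)=K(W')$. A splitting of $\sO_Y\to g_*\sO_W(\lfloor g^*\Delta_Y\rfloor)$ corresponds, by Grothendieck duality for the finite map $g$, to a map $g_*\sO_W(\lfloor g^*\Delta_Y\rfloor)\to\sO_Y$ sending $1\mapsto 1$. Equivalently, it is an element of $H^0(W,\sO_W(K_W-g^*K_Y-\lfloor g^*\Delta_Y\rfloor))$ with trace $1$, or a global section of $\sO_W(\lceil -g^*(K_Y+\Delta_Y)\rceil+K_W)$ mapping to $1\in H^0(Y,\sO_Y)$ under the trace. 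The analogous description holds on $X$ with $W'$.

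Since $g^*(K_Y+\Delta_Y)=\pi^*h^*(K_X+\Delta)$, the divisor $K_W-g^*(K_Y+\Delta_Y)$ is $K_W-\pi^*(K_{W'}-\text{stuff})$, and rational sections of $K_W-\pi^*h^*(K_X+\Delta)$ identify with those of $K_{W'}-h^*(K_X+\Delta)$. The key comparison is then
\[
H^0\bigl(W,\sO_W(\lfloor K_W-g^*(K_Y+\Delta_Y)\rfloor)\bigr)\cong H^0\bigl(W',\sO_{W'}(\lfloor K_{W'}-h^*(K_X+\Delta)\rfloor)\bigr).
\]
The map $\subseteq$ comes from pushing forward along $\pi$. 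For $\supseteq$ we need the statement that $\pi_*\sO_W(\lfloor K_W-\pi^*L\rfloor)=\sO_{W'}(\lfloor K_{W'}-L\rfloor)$ for a $\mathbb{Q}$-Cartier $L$.

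That reverse inclusion is the main obstacle. In general this requires something like rational-type behavior; I would instead avoid it by working with $H^0$ on $Y$ and $X$ directly. A section on $W'$ is a rational section of the sheaf on $W$, regular away from $\pi$-exceptional divisors. Its trace to $Y$ is a rational function regular off $f$-exceptional loci, which equals the constant $1$. So instead of trying to get regularity on $W$, I would use $\Delta_Y\geq 0$ together with the compatibility of traces.

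Alternatively, and more in the spirit of \cite{BMPSTWW1}, I would use their characterization of global $+$-regularity as the surjectivity/injectivity statement $H^{d}(X,\sO_X(K_X+\Delta))\to H^d(X^+,\ldots)$. This passes to $Y$ because $Rf_*$ commutes with the relevant maps and $f^*(K_X+\Delta)=K_Y+\Delta_Y$. I expect the exceptional-divisor regularity to be the genuine difficulty, and I would try the cohomological criterion first.
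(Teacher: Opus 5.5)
There is a genuine gap. You have correctly located the difficulty, but the proposal does not get past it.

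\textbf{Why the ``work with $H^0$ directly'' route fails.} A splitting of $\mathcal{O}_Y\to g_*\mathcal{O}_W(\lfloor g^*\Delta_Y\rfloor)$ must send \emph{every} local section into $\mathcal{O}_Y$, not only $1\mapsto 1$. The splitting on $W'$ gives a rational map $\phi=\mathrm{Tr}_{K(W)/K(Y)}(a\cdot -)$ with $\phi(1)=1$. This map is regular at the generic point of an $f$-exceptional prime divisor $E\subset Y$ exactly when $a$ has no poles along the prime divisors of $W$ over $E$. Here $a$ is viewed as a rational section of $\mathcal{O}_W(K_W+\lceil -g^*(K_Y+\Delta_Y)\rceil)$; note the round-up, not the round-down in your display. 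That regularity condition is precisely the reverse inclusion $\pi_*\mathcal{O}_W(K_W+\lceil-\pi^*L\rceil)\supseteq\mathcal{O}_{W'}(K_{W'}+\lceil -L\rceil)$ you wanted to avoid.

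The hypothesis $\Delta_Y\geq 0$ cannot help with this. The divisor equals $K_W+\lceil -\pi^*h^*(K_X+\Delta)\rceil$ and depends only on the crepant structure; $\Delta_Y\ge 0$ is needed only for the conclusion to make sense. For a single cover, the reverse inclusion is a klt/rational-type condition on the typically very singular scheme $W'$, and there is no reason for it to hold. One is forced to pass to further finite covers of $W$.

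\textbf{The missing ingredient.} The input behind \cite[6.28]{BMPSTWW1} is that global $+$-regularity (equivalently, the $+$-stable sections $B^0$) can be tested on \emph{alterations}, not just finite covers. Concretely: if $(X,\Delta)$ is globally $+$-regular and $\mu\colon W\to X$ is any alteration from a normal integral scheme, then $1$ lies in the trace image of $H^0(W,\mathcal{O}_W(K_W+\lceil -\mu^*(K_X+\Delta)\rceil))$ in $H^0(X,\mathcal{O}_X)$. This is not formal. It comes from Bhatt's theorem on the cohomology of absolute integral closures, the same input behind the killing-cohomology statement \cite[Proposition 3.1]{BMPSTWW1}.

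Your cohomological alternative silently assumes exactly this when you say $Rf_*$ ``commutes with the relevant maps''. Identifying the relevant cohomology of $Y^+$ with that of $X^+$ (after $p$-completion) is the deep theorem, not a formality.

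Once that fact is available, the lemma is immediate. Take $\mu=f\circ g$, an alteration of $X$, and note $g^*(K_Y+\Delta_Y)=\mu^*(K_X+\Delta)$. The traces from $W$ to $Y$ and from $W$ to $X$ are both the field trace of $K(W)$ over $K(X)=K(Y)$, and $H^0(Y,\mathcal{O}_Y)=H^0(X,\mathcal{O}_X)$. So $1$ lies in the trace image from $W$ to $Y$, and $g$ splits.
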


\subsection{Log minimal model program}

We recall some facts about the Log Minimal Model Program (LMMP). 

\begin{theorem}\label{mmp}  Let $(X, B)$ be a projective klt pair over a field $k$ of characteristic $0$. If $K_{X}+B$ is not pseudo-effective, then we can run a terminating $(K_{X}+B)$-MMP which terminates with a Mori fibre space.
\begin{proof} By \cite[Subsection 2.3]{GNT19}, we can assume that $k$ is algebraically closed. Then the assertion follows from \cite[Corollary 1.3.2]{BCHM10}.
\end{proof}
\end{theorem}

\begin{proposition}\label{prop:small_Q-fact}
Let $Z$ be an integral normal Noetherian excellent scheme with a dualizing complex.  Assume that $Z$ is of mixed characteristic and has dense closed points.  Let $(X,B)$ be a klt pair which is projective and surjective over $Z$.  Then after replacing $Z$ with an open subset and $X$ with its preimage, there exists a small birational morphism $\pi:Y\to X$ such that $Y$ is $\mathbb{Q}$-factorial and $\pi^*(K_X+B)\sim_{\mathbb{Q}}K_Y+B_Y$ for klt pair $(Y,B_Y)$.
\end{proposition}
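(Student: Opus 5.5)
We construct the small $\mathbb{Q}$-factorialization on the generic fiber, where characteristic-zero results apply, and then spread it out over an open subset of $Z$. Let $\eta$ be the generic point of $Z$ and $K=k(\eta)$. Since $Z$ is of mixed characteristic and integral, $K$ has characteristic $0$. The generic fiber $X_\eta$ is a normal projective variety over $K$. The pair $(X_\eta,B_\eta)$ is klt, because discrepancies over the generic fiber are computed by divisors on $X$ dominating $Z$, and being klt is preserved by localization.

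Over a field of characteristic zero the existence of a small $\mathbb{Q}$-factorial modification of a klt pair is known from \cite{BCHM10} (\cite[Corollary 1.4.3]{BCHM10}), after base change to the algebraic closure as in \autoref{mmp} via \cite[Subsection 2.3]{GNT19}. Concretely, we take a log resolution $h:W\to X_\eta$ and put $B_W=h^{-1}_*B_\eta+(1-\epsilon)\mathrm{Exc}(h)$ for small $\epsilon>0$. We then run a $(K_W+B_W)$-MMP over $X_\eta$. It terminates with a model $Y_\eta$ on which all $h$-exceptional divisors are contracted, because each has positive discrepancy relative to the klt pair. The resulting morphism $\pi_\eta:Y_\eta\to X_\eta$ is small and $Y_\eta$ is $\mathbb{Q}$-factorial. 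Writing $B_{Y_\eta}=\pi_{\eta*}^{-1}B_\eta$, we get $K_{Y_\eta}+B_{Y_\eta}=\pi_\eta^*(K_{X_\eta}+B_\eta)$ and $(Y_\eta,B_{Y_\eta})$ is klt.

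Next we spread out. The morphism $\pi_\eta$ is projective and of finite type, so it extends to a projective birational morphism $\pi:Y\to X_U$ over some open $U\subseteq Z$, where $X_U$ is the preimage of $U$. Shrinking $U$ further, we can arrange the following.
\begin{enumerate}
\item $Y$ is normal.
\item $\pi$ is small. Its exceptional locus has no divisorial components on the generic fiber, and any exceptional divisor not dominating $Z$ lies over finitely many closed subsets we can remove.
\item $B_Y$ is the closure of $B_{Y_\eta}$, and the relation $K_Y+B_Y\sim_{\mathbb{Q}}\pi^*(K_X+B)$ holds. The linear equivalence witnesses on $Y_\eta$ extend over an open set.
\item $(Y,B_Y)$ is klt. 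This is automatic, since it is crepant to the klt pair $(X,B)$.
\end{enumerate}

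The main obstacle is $\mathbb{Q}$-factoriality, which is not an open condition fiberwise in general. We handle it through the class group. Because $Y$ is normal, every Weil divisor on $Y$ restricts to one on $Y_\eta$. Its restriction has a Cartier multiple on $Y_\eta$, so the divisor differs from a $\mathbb{Q}$-Cartier divisor by a combination of prime divisors not dominating $U$. Such a vertical divisor is the pullback of a divisor on $U$ only if $U$ is regular and the fibers are irreducible. We therefore shrink $U$ so that it is regular, which is possible since excellent schemes have open regular loci, and so that the fibers of $Y\to U$ over codimension-one points are integral. We first make the fibers geometrically integral over a dense open, and then remove the closures of the finitely many codimension-one points of $U$ over which $Y$ has reducible fibers. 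The density of closed points and the excellence of $Z$ ensure that the nonempty open set left after these shrinkings is still meaningful for the later argument.

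After these shrinkings, a vertical prime divisor is the pullback of a Cartier divisor on regular $U$. Hence every Weil divisor on $Y$ is $\mathbb{Q}$-Cartier: on the generic fiber it is $\mathbb{Q}$-Cartier by construction, and we propagate this by taking closures of local Cartier equations and shrinking $U$. For this last step we use that $\mathrm{Cl}(Y_\eta)\otimes\mathbb{Q}$ is generated by finitely many classes modulo Cartier ones, so only finitely many shrinkings are needed, which is where finite generation requires care.
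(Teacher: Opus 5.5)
There is a genuine gap in your last step, and it is the point the paper's proof is designed to avoid. Your construction of $Y_\eta$ and the spreading out in (1)--(4) are fine, but $\mathbb{Q}$-factoriality of the spread-out $Y$ is not established. For a single Weil divisor $D$ on $Y$, invertibility of $\mathcal{O}_Y(mD)$ along $Y_\eta$ spreads to some open $U_D$. But $U_D$ depends on $D$ and there are infinitely many divisors, so you need a uniform finiteness statement.

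The one you state, that $\mathrm{Cl}(Y_\eta)\otimes\mathbb{Q}$ is finitely generated modulo Cartier classes, is vacuous. $Y_\eta$ is $\mathbb{Q}$-factorial, so that quotient is $0$, and the divisors whose closures may need further shrinking are exactly the Cartier divisors of $Y_\eta$. Your reduction to ``finitely many generators plus principal plus vertical'' actually needs $\mathrm{Pic}(Y_\eta)\otimes\mathbb{Q}$ itself to be finite-dimensional. That fails in the generality of the statement. For instance, $Z=\mathrm{Spec}\,\mathbb{Z}_p[x]$ satisfies all the hypotheses. If $X_\eta=E_K\times_K S$ with $E/\mathbb{Q}_p$ an elliptic curve, then $\mathrm{Pic}(Y_\eta)\otimes\mathbb{Q}$ contains $E(\mathbb{Q}_p)\otimes\mathbb{Q}\supseteq\mathbb{Q}_p$, which is infinite-dimensional over $\mathbb{Q}$. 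Even when $K$ is finitely generated over $\mathbb{Q}$, you would need N\'eron--Severi together with Lang--N\'eron, which you do not supply.

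A secondary issue: making all fibres over codimension-one points integral requires $Y_\eta$ to be geometrically integral. This is not given, since $g_*\mathcal{O}_X=\mathcal{O}_Z$ is not assumed. Otherwise infinitely many codimension-one points can have reducible fibres, and you would first have to pass to the Stein factorization (and use flatness, so that vertical primes lie over prime divisors of $U$).

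The missing idea is to spread out the \emph{construction} rather than its output, so that $\mathbb{Q}$-factoriality is inherited instead of checked afterwards. After shrinking $Z$, a log resolution $W$ of $(X,B)$ exists on the total space, and it is regular, hence $\mathbb{Q}$-factorial. The paper then spreads out each of the finitely many steps of the BCHM MMP with scaling over $X$. After further shrinking, each contraction has relative Picard number one: take finitely many relatively ample divisors spanning $N^1$ and spread out their $\mathbb{Q}$-linear relations from the generic fibre. The flips are extended as in Das--Waldron. Each step preserves $\mathbb{Q}$-factoriality and only finitely many shrinkings occur, so the output $Y\to X$ is a small $\mathbb{Q}$-factorial modification over an open subset of $Z$. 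This is exactly the uniformity that your after-the-fact class-group check cannot supply.
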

\begin{proof}
    The existence of a small $\mathbb{Q}$-factorialization of the generic fiber is \cite[Corollary 1.4.3]{BCHM10}.  Unfortunately the $\mathbb{Q}$-factorial condition does not spread out, but instead we note that the construction from \cite{BCHM10} does spread.  In particular, up to shrinking the base, we may assume that a log resolution of $(X,B)$ exists. 
    A small $\mathbb{Q}$-factorialization is then constructed from a log resolution by running a certain LMMP with scaling starting from this resolution, which terminates in characteristic zero by \cite[Corollary 1.4.2]{BCHM10}.  Each contraction in the LMMP spreads out over a suitable open subset to give a contraction \(f:X\to V\). We claim that it spreads out to a step of the LMMP after shrinking the base $Z$.  The only condition which is not clear is that we can obtain $\rho(X/V)=1$.  For this, let $D_1,...,D_m$ be a basis of $N^1(X/V)$ consisting of divisors which are ample over $V$. 
    Then since $f_\xi:X_{\xi}\to V_{\xi}$ has Picard rank $1$, where $\xi$ is the generic point of \(V\), there are $\lambda_i>1$ such that $D_i|_{X_\xi}\equiv\lambda_iD_1|_{X_{\xi}}$.  Then by the base point free theorem \cite{KollarMori} we have $D_i|_{X_\xi}\sim_{\mathbb{Q}}\lambda_iD_1|_{X_{\xi}}$.  
    For this it follows that there is an open subset $U_i\subset Z$ such that $D_i|_{U_i}\sim_{\mathbb{Q}}\lambda_iD_1|_{U_i}$.  Now we find that over $U=\cap_i U_i$, we have $\rho(X_U/V_U)=1$ as desired.  Therefore the contractions extend to contractions of relative Picard rank one over a restricted base.  It remains to prove that the flips extend.  But this is true by the same argument as \cite[Corollary 1.11]{das_waldron_imperfect} using the existence of good log minimal models in the relative birational case in characteristic zero, which follows from \cite{BCHM10} and the base point free theorem.  
    At the end of the process, we obtain a morphism
     \(Y\to X\) which is small over the generic point $\xi$ of $Z$ and satisfies $\pi^*(K_{X_{\xi}}+B_{\xi})\sim_{\mathbb{Q}}K_{Y_{\xi}}+B_{Y_{\xi}}$.  After shrinking $Z$ further, we may assume that $\pi$ is small, and hence that $\pi^*(K_X+B)\sim_{\mathbb{Q}}K_Y+B_Y$ holds. 
\end{proof}

Some of our results are contingent on the following conjecture:

\begin{conjecture}\label{non-vanishing_conjecture} (Non-vanishing Conjecture) Let $X$ be a smooth projective variety over $\C$. If $K_X$ is pseudo-effective, then $\kappa(X)\geq 0$.
\end{conjecture}

\subsection{Rational chain connectedness}

Here we recall the definitions and important properties of rationally (chain) connected varieties.

\begin{definition}\cite[IV.3.2]{KollarRationalCurves}\label{def:rationally_connected}
    We say a variety over a field $k$ is \emph{rationally chain connected} if there is a family of proper and connected algebraic curves $g:U\to Y$ on $X$ whose geometric fibers have only rational components with cycle morphism $u:U\to X$ such that 
    $$u^{(2)}:U\times_Y U\to X\times X$$
    is dominant.  

    We say that $X$ is \emph{rationally connected} if there is a family as above whose geometric fibers are irreducible rational curves.

\end{definition}

\begin{definition}
    For a proper morphism $f:X\to S$, we say that $X$ is \emph{rationally (chain) connected over $S$} if $X_s$ is rationally (chain) connected for every $s\in S$.  
    \end{definition}
    
Note that our definition of relatively rationally connected is not standard, and that applying \autoref{def:rationally_connected} to a family over a DVR would say nothing about the special fiber.  However, by the following lemma, relative rational chain connectedness can often be checked over the generic point of the base, so in that case both definitions would be equivalent.

\begin{lemma}\label{lem:rational_chain_closed}
    Let $f:X\to S$ be a proper and equidimensional morphism to an integral scheme $S$, and $\xi$ the generic point of $S$.  If $X_\xi$ is rationally chain connected then $X_s$ is rationally chain connected for every $s\in S$. 
\end{lemma}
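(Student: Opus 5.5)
The strategy is the standard specialization argument for rational chain connectedness, following \cite[IV.3.5]{KollarRationalCurves}: rationally chain connected fibers specialize to rationally chain connected fibers in proper families.

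\textbf{Step 1: reduce to a DVR.} Fix $s\in S$. Choose a DVR $R$ with a morphism $T=\Spec(R)\to S$ sending the generic point $\eta_T$ to $\xi$ and the closed point $0$ to $s$; this exists since $S$ is integral (and can be taken essentially of finite type, or after completing). Rational chain connectedness is invariant under field extension, for geometric fibers, in the sense of \autoref{def:rationally_connected}. Hence $X_{\eta_T}$ is rationally chain connected, and it suffices to show $X_0=X_s\otimes k(0)$ is. Equidimensionality ensures that every component of $X_0$ has the expected dimension and lies in the closure of $X_{\eta_T}$. Replacing $X_T$ by the closure of its generic fiber, which is flat over $T$, does not change $X_0$ set-theoretically, because no component of $X_0$ is vertical of larger dimension. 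This is where equidimensionality is used.

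\textbf{Step 2: spread out the family of chains.} By definition there is a family $g:U\to Y$ of connected chains of rational curves over $K=\mathrm{Frac}(R)$ with $u^{(2)}$ dominant. Equivalently, after a finite extension of $K$, for very general pairs $(x_1,x_2)$ of geometric points of $X_{\eta_T}$ there is a chain of rational curves of bounded length and bounded degree with respect to a relatively ample line bundle connecting them. Use the relative Chow variety, or the Hilbert scheme, of $X_T/T$ parametrizing connected curves of that bounded degree with two marked points. It is proper over $T$, and the locus of chains with rational components is closed by \cite[II.2]{KollarRationalCurves}, since limits of rational curves are unions of rational curves. Let $H$ be the closure of the component dominating $X_{\eta_T}\times X_{\eta_T}$. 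Its image in $X_T\times_T X_T$ is closed and contains the generic fiber, hence contains all of $X_0\times X_0$ by Step 1. Therefore any two geometric points of $X_0$ are joined by a limit cycle, which is a connected curve whose components are all rational.

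\textbf{Step 3: handle non-irreducible fibers.} Conclude that $X_0$ is rationally chain connected in the sense of \autoref{def:rationally_connected}, using the properness of $H_0\to X_0\times X_0$ to produce the required family over a component of $H_0$. The main obstacle is that the definition is phrased through a dominant family for varieties, while $X_0$ may be reducible or non-reduced. I would interpret the statement for $(X_0)_{\mathrm{red}}$ and show that each pair of points, in particular generic points of different components, is connected. Surjectivity of $H_0\to X_0\times X_0$ gives this directly. Finally, a chain connecting a pair of points lying in different components can be chosen inside $X_0$, because the limit cycle lies in the fiber.
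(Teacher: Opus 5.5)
Your route differs from the paper's. The paper simply invokes \cite[IV.3.5]{KollarRationalCurves}: the set of $s$ with $X_s$ rationally chain connected is a countable union of closed subsets $S_i$, and since $\xi$ lies in one of them, $S_i=S$. You instead reprove the specialization by hand, via a DVR and a closure argument in a relative Hilbert/Chow scheme. Steps 2 and 3 are the standard argument and are fine. One caveat: bounding degrees needs a relatively ample line bundle, i.e.\ $f$ projective rather than merely proper, which is harmless for the paper's applications. The gap is in Step 1.

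You assert that equidimensionality puts every point of $X_0$ in $\overline{X_{\eta_T}}$. Your reason (no vertical component of larger dimension) misses the actual danger: after base change to $T$, $X_T$ can acquire vertical components of the \emph{same} dimension as the fibres. Take $S$ a nodal curve with normalization $\nu\colon\tilde{S}\to S$ and node $s$ with $\nu^{-1}(s)=\{a,b\}$. Let $X=\mathbb{P}^1\times\tilde{S}$ and $R=\mathcal{O}_{\tilde{S},a}$. Then $f$ is proper, $X$ is irreducible and all fibres are pure of dimension one. Yet $\mathbb{P}^1\times\{b\}$ is an irreducible component of $X_T$ lying over the closed point. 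Moreover $X_\xi\cong\mathbb{P}^1_{K(S)}$ is rationally chain connected while $X_s$ is two disjoint lines. So no argument can derive Step 1 from equidimensionality alone; an extra hypothesis is genuinely needed, and the paper's one-line citation leaves it implicit.

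The missing ingredient is openness of $X_T\to T$: a point of $X_0$ outside $\overline{X_{\eta_T}}$ would have an open neighbourhood mapping into the non-open set $\{0\}$. Openness holds when $f$ is universally open. By Chevalley's criterion (EGA IV, 14.4.4), this is the case if $S$ is normal and $f$ is equidimensional with every component of $X$ dominating $S$. That covers the paper's main applications, where $S$ is a DVR or the normal base $Z$. Universal openness also makes $X_T\times_T X_T\to T$ open, which is exactly what your ``hence contains all of $X_0\times X_0$'' in Step 2 requires.
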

\begin{proof}
    \cite[IV.3.5]{KollarRationalCurves} says that there are countably many closed subschemes $S_i\subseteq S$ such that a fiber $X_s$ is rationally chain connected if and only if $s\in S_i$ for some $i$.  If $X_\xi$ is rationally chain connected, then we must have $S_i=S$ for some $i$, and so all fibers are rationally chain connected.
\end{proof}

\begin{theorem}\cite[Corollary 1.8]{HM07}
    Let $(X,\Delta)$ be dlt pair over a field of characteristic zero.  Then $(X,\Delta)$ is rationally connected if and only if it is rationally chain connected.
\end{theorem}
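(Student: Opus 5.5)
The statement is \cite[Corollary 1.8]{HM07}, so the plan is to recall how Hacon--McKernan's argument goes rather than reprove the MMP input. One direction is trivial: a rationally connected variety is rationally chain connected, since a family of irreducible rational curves is in particular a family of chains of rational curves. For the converse, we may base change to an algebraically closed field. Rational connectedness and rational chain connectedness are both stable under this base change. The dlt condition also survives, after passing to components, since we are in characteristic zero. So we may assume $k=\overline{k}$.

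The key reduction is to the smooth case, where rational chain connectedness implies rational connectedness by \cite[IV.3.10]{KollarRationalCurves}: on a smooth variety, chains of rational curves through general points can be smoothed to a single very free curve. So we take a log resolution $\pi:\tilde X\to X$ of $(X,\Delta)$. Since rational connectedness is a birational invariant among smooth proper varieties, $X$ is rationally connected once $\tilde X$ is. It therefore suffices to show that $\tilde X$ is rationally chain connected.

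Given two general points of $\tilde X$, we join their images by a chain of rational curves in $X$. We then need to lift this chain to $\tilde X$. Strict transforms of the components handle the parts of the chain away from the exceptional locus. The remaining gaps occur where the chain passes through $\pi(\mathrm{Exc}(\pi))$, and we must bridge them inside the fibers of $\pi$ over those points.

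The main obstacle is exactly this: showing that every fiber of $\pi$ is rationally chain connected. This is the real content of \cite{HM07}, which proves that the fibers of a resolution of a klt (more generally dlt) pair are rationally chain connected. Their argument is an induction via the MMP. They run a relative MMP over $X$ starting from a suitable dlt modification. They then use the fact that exceptional divisors arising in such an MMP are covered by rational curves, by bend-and-break applied to extremal rays with negative $K$-degree, together with connectedness of the exceptional fibers.

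To handle dlt rather than klt singularities, we perturb. Near the general points in question the pair is klt, and the dlt locus can be avoided, or reduced to the klt case by shrinking $\Delta$ using $(X,(1-\epsilon)\Delta)$, possibly after a small $\mathbb{Q}$-factorialization. Once the fibers are known to be rationally chain connected, the chains lift to $\tilde X$, and \cite[IV.3.10]{KollarRationalCurves} finishes the proof.
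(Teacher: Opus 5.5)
The paper gives no proof of this statement; it quotes it from \cite{HM07}. Your skeleton is the standard one: take a log resolution $\pi\colon\tilde X\to X$, show $\tilde X$ is rationally chain connected, apply \cite[IV.3.10]{KollarRationalCurves} to the smooth variety $\tilde X$, and push forward. However, the lifting step has a genuine gap. Strict transforms exist only for chain components that are not contained in $Z=\pi(\mathrm{Exc}(\pi))$. Only the two components through the general endpoints are forced to leave $Z$; intermediate components may lie entirely inside $Z$.

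For such a component $C$ you cannot bridge inside a single fibre. You must join the fibre of $\pi$ over one point of $C$ to the fibre over another point of $C$, so you need a rational curve (or chain) in $\pi^{-1}(C)$ dominating $C$. This does not follow from the fibres being rationally chain connected. For example, let $B\to\mathbb{P}^1$ be a hyperelliptic curve of genus $\geq 1$ with involution $\iota$. In $B\times\mathbb{P}^1$, glue $B\times\{0\}$ to $B\times\{\infty\}$ via $(b,0)\sim(\iota b,\infty)$. Every fibre of the result over $\mathbb{P}^1$ is a connected union of rational curves. Yet every rational curve lies in a fibre, so the total space is not rationally chain connected.

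What you actually need from \cite{HM07} is therefore a lifting statement: every rational curve on $X$ is dominated by a rational curve on $\tilde X$, so that $\tilde X$ is rationally chain connected whenever $X$ is. This uses the klt structure again and does not follow formally from the fibre statement you isolate as the main obstacle. In fact this lifting is the whole content of the corollary. A rational curve through two general points of $X$ is not contained in $Z$, so it has a strict transform; hence $X$ is rationally connected iff $\tilde X$ is, for any $X$.

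Two smaller points.
\begin{enumerate}
\item $X$ is rationally connected once $\tilde X$ is because dominant images of rationally connected varieties are rationally connected. Birational invariance among smooth varieties does not apply here, since $X$ is singular.
\item In the dlt case you cannot simply avoid $\lfloor\Delta\rfloor$, since the chains may pass through it. Also, $K_X+(1-\epsilon)\Delta$ need not be $\mathbb{Q}$-Cartier. Use instead the standard klt perturbation $(1-\epsilon)\Delta+\frac{\epsilon}{m}G$, where $m\Delta$ is integral, $H$ is sufficiently ample, and $G$ is a general member of $|m\Delta+H|$. This requires $X$ to be quasi-projective.
\end{enumerate}
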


See also \cite{KollarRationallyConnectedVarieties} for the smooth case.

\section{Strongly globally $+$-regular}

Allowing for perturbation along an effective divisor is an important enhancement of classes of singularities in all characteristics. For example, in characteristic zero, this roughly strengthens log canonical singularities to klt and Calabi-Yau to log Fano. In positive characteristic, perturbation of Frobenius splittings led to the development of the theory of tight closure, and upgrades globally F-split to globally F-regular varieties. In mixed characteristic, however, the situation is not yet well understood; while it is hoped for but ultimately not yet known that $+$-regular and globally $+$-regular varieties are stable under small perturbation in analogy to their counterparts in equal characteristic. In the local setting, this has led to the competing notion of BCM-regular singularities, see \cite{MaSchwedeSingularitiesMixedCharBCM} and in particular \cite[Theorem C]{MaSchwedeSingularitiesMixedCharBCM}. Recently, \cite{BMPSTWW2} has allowed for the development of a unified theory of mixed characteristic test ideals up to such perturbations, see \autoref{BMPSTWW2theoremB} below. We now also introduce and explore a variant of globally $+$-regular that builds a perturbation into the definition. This generalization appeared implicitly in \cite[Setting 8.5, Theorem 8.25]{BMPSTWW1}, but given its relation with \cite{BMPSTWW2} and the stronger results obtained for it in this paper, we believe that it warrants a name.  

\subsection{Definitions and basic properties} 

\begin{definition}
    $(X,\Delta)$ is \emph{strongly globally $+$-regular} if for every effective divisor $D\geq 0$, there exists $\epsilon>0$ such that $(X,\Delta+\epsilon D)$ is globally $+$-regular.
\end{definition}

The following from \cite{BMPSTWW2} shows that this recovers an important class of singularities when applied to an affine variety. 

\begin{theorem}\cite[Theorem B]{BMPSTWW2}
\label{BMPSTWW2theoremB}
    For $X=\Spec(R)$ the spectrum of an excellent local ring which is normal, integral, flat and essentially of finite type over a DVR of mixed characteristic.  Then  the following are equivalent:
    \begin{enumerate}
        \item $(X,\Delta)$ is strongly globally $+$-regular,
         \item $\tau(X,\Delta)=\sO_X$ where $\tau$ is the test ideal from \cite{BMPSTWW2},
        \item $(X,\Delta)$ has perfectoid BCM-regular singularities \cite{MaSchwedeSingularitiesMixedCharBCM} at all points.
        \end{enumerate}
\end{theorem}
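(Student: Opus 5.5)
Since this equivalence is \cite[Theorem B]{BMPSTWW2}, I would not reprove it from scratch. Instead I would reduce the three conditions to statements about trace images and purity over a local ring, and then invoke the comparison theorems of \cite{BMPSTWW2}. Throughout, $R$ is local, so an ideal equals $R$ exactly when it is not contained in the maximal ideal $\fram$. Hence every condition can be checked as the surjectivity of a single map onto $R$, i.e.\ as a purity or splitting statement at the closed point.

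For (1)$\Leftrightarrow$(2), I would first translate the definition in the affine case. $(X,\Delta+\epsilon D)$ is globally $+$-regular if and only if, for every finite normal cover $f:Y\to X$, the evaluation map
\[
\operatorname{Hom}_{\sO_X}\bigl(f_*\sO_Y(\lfloor f^*(\Delta+\epsilon D)\rfloor),\sO_X\bigr)\to\sO_X
\]
is surjective. By Grothendieck duality for finite morphisms, its image is the trace image
\[
\operatorname{Tr}\bigl(f_*\sO_Y(K_Y-\lfloor f^*(K_X+\Delta+\epsilon D)\rfloor)\bigr)\subseteq \sO_X,
\]
after fixing a canonical divisor and twisting appropriately. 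Now recall that \cite{BMPSTWW2} defines $\tau(X,\Delta)$ as the intersection of these trace images over all finite covers $f$ and all small perturbations $\epsilon D$. They show the intersection stabilizes: for a fixed sufficiently divisible $D$ and small $\epsilon$, it is computed by $R^+$, or equivalently by a single sufficiently large cover. Granting that stabilization, $\tau(X,\Delta)=\sO_X$ holds iff every such trace image is all of $\sO_X$ for some $\epsilon$ depending on $D$, which is (1). The only subtlety is quantifiers: (1) allows $\epsilon$ to depend on $D$, whereas the test ideal uses a cofinal family of perturbations. I would handle this by noting that conditions on $\epsilon D$ only get stronger as $D$ grows, so cofinality suffices.

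For (2)$\Leftrightarrow$(3), recall that perfectoid BCM-regularity means $R\to B(\epsilon\operatorname{div}(g)+\Delta)$ is pure, for all perfectoid big Cohen--Macaulay $R^+$-algebras $B$ and small perturbations. Condition (3) immediately gives (2) by taking $B=\widehat{R^+}$, which is BCM by Bhatt's theorem; by Matlis duality, purity of $R\to\widehat{R^+}$ with perturbation is the same as the trace condition above. For the converse I would use the key comparison in \cite{BMPSTWW2}. Any perfectoid BCM algebra $B$ receives, after enlarging, a map from $\widehat{R^+}$, and the two agree up to $p^{1/p^\infty}$-almost mathematics. The built-in perturbation absorbs the almost-zero discrepancy: take $D\ge\operatorname{div}(p)$ and use an $\epsilon$-multiple. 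Hence purity into $\widehat{R^+}$ with a slightly larger perturbation implies purity into $B$.

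The main obstacle is this last comparison between $R^+$ and arbitrary perfectoid BCM algebras, i.e.\ showing that the $p$-almost ambiguity is killed by the perturbation. The $\epsilon$-stabilization of the trace intersection in the first step is also nontrivial. Both would be quoted from \cite{BMPSTWW2} (via André's flatness lemma and Bhatt's Cohen--Macaulayness of $\widehat{R^+}$), not reproved here.
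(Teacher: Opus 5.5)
Your proposal matches the paper: the paper gives no argument of its own for this equivalence and simply quotes \cite[Theorem B]{BMPSTWW2}, and your proof likewise rests entirely on that citation. The reductions you sketch are heuristic commentary on where the difficulty in the cited result lies, not steps the proof depends on. These are the trace-image reformulation, stabilization in $\epsilon$ and $D$, and absorbing the almost-zero discrepancy between $\widehat{R^+}$ and other perfectoid BCM algebras by a perturbation. They neither add to nor detract from the argument.
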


We now prove some elementary properties of strongly globally $+$-regular varieties. 
\begin{proposition}
Let $(X,\Delta)$ be an $F$-finite variety over a field of characteristic $p$.  Then $(X,\Delta)$ is globally $F$-regular if and only if it is strongly globally $+$-regular. 
\end{proposition}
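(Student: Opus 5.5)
Both directions reduce to comparing splittings along finite covers with splittings of Frobenius. Throughout I take $X$ projective, or at least proper, over an $F$-finite field $k$, so that global sections are finite-dimensional. I use the standard characterization of global $F$-regularity (Schwede–Smith): $(X,\Delta)$ is globally $F$-regular if and only if for every effective divisor $D$ there is $e>0$ such that
\[
\sO_X\to F^e_*\sO_X(\lceil (p^e-1)\Delta\rceil + D)
\]
splits. Equivalently, for every $D\geq 0$ there is $\epsilon>0$ with $(X,\Delta+\epsilon D)$ globally $F$-regular, or even just globally sharply $F$-split.

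\textbf{Globally $F$-regular implies strongly globally $+$-regular.} Given $D\ge 0$, pick $\epsilon>0$ such that $(X,\Delta+\epsilon D)$ is still globally $F$-regular. It then suffices to prove that a globally $F$-regular pair $(X,\Delta')$ is globally $+$-regular; this is the global analogue of the fact that strongly $F$-regular implies splinter.

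Let $f:Y\to X$ be finite dominant with $Y$ normal. First I would use global $F$-regularity to enlarge $\Delta'$ slightly, so that I may assume the coefficients of $\Delta'$ have denominators prime to $p$; then $(p^e-1)\Delta'$ is integral for suitable $e$. Next choose an effective Cartier divisor $C$ on $X$ such that the trace map gives a nonzero map $f_*\sO_Y(\lfloor f^*\Delta'\rfloor)\to \sO_X(C)$ sending $1\mapsto c\neq 0$, with $c$ a section vanishing along $C$. By global $F$-regularity applied to the divisor $C$, there is $e$ and a splitting
\[
\phi: F^e_*\sO_X((p^e-1)\Delta'+C)\to\sO_X.
\]
The map $\sO_X\to f_*\sO_Y(\lfloor f^*\Delta'\rfloor)$ then factors, up to multiplication by $c$, into $F^e_*\sO_X((p^e-1)\Delta'+C)$. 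Concretely, the composite of the trace $f_*\sO_Y(\lfloor f^*\Delta'\rfloor)\to\sO_X(C)$ with $F^e_*$ of the $p^e$-th power map lets $\phi$ produce a splitting. I expect the main bookkeeping obstacle to be making these twists by $\Delta'$ compatible: one needs $f^*$ of the relevant divisors to be accounted for, and the inseparable part of $f$ has to be handled. To handle it, I would first factor $f$ through Frobenius-type purely inseparable maps, which are dominated by some $F^{e'}$ and are therefore split directly by global $F$-regularity.

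\textbf{Strongly globally $+$-regular implies globally $F$-regular.} Fix $D\ge0$ and choose $\epsilon$ with $(X,\Delta+\epsilon D)$ globally $+$-regular. Take $e$ large, and take the finite cover $f=F^e:X\to X$, which is finite because $X$ is $F$-finite and which has normal source $X$. Then $f^*(\Delta+\epsilon D)=p^e(\Delta+\epsilon D)$. The splitting of
\[
\sO_X\to F^e_*\sO_X(\lfloor p^e\Delta+p^e\epsilon D\rfloor)
\]
then contains a splitting of $\sO_X\to F^e_*\sO_X(\lceil(p^e-1)\Delta\rceil + D)$ as soon as $p^e\epsilon\ge 1$ and $\lfloor p^e\Delta + p^e\epsilon D\rfloor\ge \lceil (p^e-1)\Delta\rceil + D$. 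The latter holds coefficientwise for large $e$, except possibly where $\Delta$ has coefficient issues. I would fix these by replacing $D$ with $D+\lceil\Delta\rceil$ before choosing $\epsilon$. Hence $(X,\Delta)$ is globally $F$-regular. This direction is easy; the main difficulty lies in the first direction.
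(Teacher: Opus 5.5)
Your proposal is correct and matches the paper's proof. In the forward direction you perturb via Schwede--Smith so that $(X,\Delta+\epsilon D)$ stays globally $F$-regular, then use that globally $F$-regular pairs are globally $+$-regular; the paper simply cites \cite[Lemma 6.14]{BMPSTWW1} for this step. In the reverse direction you apply global $+$-regularity to the cover $F^e$ after enlarging $D$ by $\lceil\Delta\rceil$, just as the paper does with $D+\Delta$.

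If you reprove the forward step yourself, fix one detail: do not use the trace. The trace sends $1\mapsto[K(Y):K(X)]$, which is $0$ whenever $p$ divides the degree, and the trace vanishes identically in the inseparable case. Instead take any $\psi\colon f_*\sO_Y\to\sO_X(C)$ with $\psi(1)=c\neq 0$, and use the splitting $\phi\circ F^e_*\psi\circ f_*(F^e_Y)$. Here $\psi$ is twisted by $\lceil (p^e-1)\Delta'\rceil+\lceil\Delta'\rceil$, and global $F$-regularity is applied to the effective divisor $\mathrm{div}(c)+\lceil\Delta'\rceil$. This works for every finite $f$, so no separate treatment of inseparability is needed.
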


\begin{proof}
For a globally $F$-regular pair $(X,\Delta)$ and any effective divisor $D \geqslant 0$, the pair  $(X, \Delta + \epsilon D)$ is globally $F$-regular for $\epsilon > 0$ sufficiently small by \cite[Corollary 6.1]{SchwedeSmithLogFanoVsGloballyFRegular}. Then, $(X, \Delta + \epsilon D)$ is globally $+$-regular by \cite[Lemma 6.14]{BMPSTWW1} and $(X,\Delta)$ is strongly globally $+$-regular. 

Now assume that $(X,\Delta)$ is strongly globally $+$-regular and let $D \geqslant 0$ be an effective divisor on $X$. The pair $(X, \Delta + \epsilon(D + \Delta))$ is globally $+$-regular for some $\epsilon > 0$. Let $e > 0$ be an integer such that $1/p^e < \epsilon$ so that $(X, \Delta + 1/p^e(D + \Delta))$ is globally $+$-regular. In particular, the twisted $e'$th iterate of the absolute Frobenius morphism
\[
    \sO_X \rightarrow F^{e'}_*\sO_X(\lfloor p^{e'}\Delta + p^{e' - e}(\Delta + D)\rfloor)
\]
is split. Notice that for $e' \gg 0$, we have $\lfloor p^{e'}\Delta + p^{e' - e}(\Delta + D)\rfloor \geqslant \lceil (p^{e'}-1)\Delta \rceil + D$ and $\sO_X \rightarrow F^{e'}_*\sO_X(\lceil (p^{e'}-1)\Delta \rceil + D)$ splits. Since $D$ is an arbitrary effective divisor, this shows that $(X,\Delta)$ is globally $F$-regular. 
\end{proof}

\begin{proposition}\label{prop:image_strong_g+r}
Let $\pi:Y\to X$ be a projective morphism such that $\pi_\ast\sO_Y=\sO_X$ and $(X,\Delta)$ a pair with $\Delta$ an effective $\mathbb{Q}$-Cartier divisor.  If $(Y,\pi^* \Delta)$ is strongly globally $+$-regular, then so is $(X,\Delta)$.
\end{proposition}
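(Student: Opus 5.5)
We reduce to the non-perturbed statement, \autoref{lem:image_g+r}, in its version for pairs. Fix an effective divisor $D\geq 0$ on $X$; we must find $\epsilon>0$ with $(X,\Delta+\epsilon D)$ globally $+$-regular. A Weil divisor $D$ need not be $\mathbb{Q}$-Cartier, so its pullback is not defined directly. We first replace $D$ by a larger effective Cartier divisor. Since enlarging the boundary only makes splitting harder (globally $+$-regular pairs are stable under decreasing the boundary, because $\sO_Y(\lfloor f^*\Delta'\rfloor)\subseteq \sO_Y(\lfloor f^*\Delta\rfloor)$ for $\Delta'\le\Delta$), it suffices to treat $D$ replaced by any effective Cartier divisor $D'\geq D$. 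Such a $D'$ exists: locally on the relevant base take a nonzero section of a sufficiently ample line bundle twisted by $\sO_X(-D)$, or on an affine chart use a principal divisor containing $D$. We then set $E:=\pi^*D'$, an effective Cartier divisor on $Y$. By strong global $+$-regularity of $(Y,\pi^*\Delta)$ there is $\epsilon>0$ with $(Y,\pi^*\Delta+\epsilon\pi^*D')=(Y,\pi^*(\Delta+\epsilon D'))$ globally $+$-regular.

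It remains to show the pair analogue of \autoref{lem:image_g+r}: if $(Y,\pi^*B)$ is globally $+$-regular with $B\ge0$ $\mathbb{Q}$-Cartier and $\pi_*\sO_Y=\sO_X$, then $(X,B)$ is globally $+$-regular. Let $f:X'\to X$ be finite dominant with $X'$ normal. Let $Y'$ be the normalization of the main component of $Y\times_X X'$, so we have $f_Y:Y'\to Y$ finite dominant and $\pi':Y'\to X'$ with $\pi\circ f_Y=f\circ\pi'$. By hypothesis $\sO_Y\to f_{Y*}\sO_{Y'}(\lfloor f_Y^*\pi^*B\rfloor)$ splits. Applying $\pi_*$ gives a splitting of
\[
\sO_X=\pi_*\sO_Y\to \pi_*f_{Y*}\sO_{Y'}(\lfloor \pi'^*f^*B\rfloor)=f_*\pi'_*\sO_{Y'}(\lfloor \pi'^*f^*B\rfloor).
\]
This map factors through $f_*\sO_{X'}(\lfloor f^*B\rfloor)$. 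Indeed, a rational function $g$ with $\mathrm{div}(g)+\lfloor f^*B\rfloor\ge0$ satisfies $\mathrm{div}(\pi'^*g)+\pi'^*\lfloor f^*B\rfloor\ge 0$, where the pullback makes sense after passing to a Cartier multiple, and $\pi'^*\lfloor f^*B\rfloor\le\lfloor\pi'^*f^*B\rfloor$. Since $\sO_X\to f_*\sO_{X'}(\lfloor f^*B\rfloor)$ composed with a map admits a retraction, it is itself split.

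The step needing the most care is this last factorization. We must pull back the possibly non-Cartier divisor $\lfloor f^*B\rfloor$ and check the containment of sheaves of rational functions. I would handle it on the big open set where $\lfloor f^*B\rfloor$ is Cartier, using reflexivity of $\sO_{X'}(\lfloor f^*B\rfloor)$ on the normal $X'$ to extend. Alternatively, it may be cleaner to cite the pair version of \cite[Lemma 5.1]{KV23} directly if it is stated there with boundary.
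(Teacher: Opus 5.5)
Your proposal is correct and is essentially the paper's proof: you enlarge $D$ to an effective Cartier divisor, use strong global $+$-regularity of $(Y,\pi^*\Delta)$ to make $(Y,\pi^*(\Delta+\epsilon D))$ globally $+$-regular, and descend via the pair version of \cite[Lemma 5.1]{KV23}, which the paper simply cites rather than reproving. In your sketch of that descent, the factorization is cleaner without a reflexive extension, which is in any case awkward because $\pi'^{-1}$ of a big open set need not be big: $\mathrm{div}(g)+\lfloor f^*B\rfloor\geq 0$ implies $\mathrm{div}(g)+f^*B\geq 0$, an effective $\mathbb{Q}$-Cartier divisor whose pullback $\mathrm{div}(\pi'^*g)+\pi'^*f^*B$ is again effective, and rounding down is harmless because $\mathrm{div}(\pi'^*g)$ is integral.
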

\begin{proof}
We may assume that $X$ and $Y$ are integral by considering the irreducible components of $X$ and $Y$. Let $D \geqslant 0$ be an effective divisor on $X$. We may replace $D$ be a larger divisor to assume that it is Cartier. Since $\pi$ is dominant the pullback $\pi^*D$ of $D$ is well-defined and effective. The pair $(Y, \pi^*(\Delta + \epsilon D))$ is globally $+$-regular for some $\epsilon > 0$ and by \cite[Lemma 5.1, (iii)]{KV23}, $(X, \Delta + \epsilon D)$ is globally $+$-regular. 
\end{proof}

\begin{proposition}\label{prop:small_strong_g+r}
Let $(X,\Delta)$ be a strongly globally $+$-regular pair and $f:X\dashrightarrow Y$ a small birational map.  Then $(Y,f_*\Delta)$ is strongly globally $+$-regular.
\end{proposition}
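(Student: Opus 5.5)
The plan is to reduce to the non-perturbed statement, \autoref{prop:flip_G+R}, by transporting effective divisors back along $f$. Since $f$ is small, push-forward and strict transform give a bijection between prime Weil divisors on $X$ and on $Y$. This bijection respects effectivity and linear combinations, so $f^{-1}_*$ and $f_*$ are mutually inverse on $\mathbb{Q}$-Weil divisors.

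The steps are as follows. Let $D_Y\geq 0$ be an effective divisor on $Y$, and set $D:=f^{-1}_*D_Y$, which is an effective Weil divisor on $X$. If we want $D$ to be $\mathbb{Q}$-Cartier, we may instead enlarge it to an effective Cartier divisor $D'\geq D$ on $X$ (for instance a suitable member of a very ample linear system containing $D$). Enlarging is harmless, because global $+$-regularity is preserved when the boundary is decreased: the map $\sO_X\to f_*\sO_W(\lfloor g^*\Delta'\rfloor)$ factors the smaller one. By strong global $+$-regularity of $(X,\Delta)$, there is $\epsilon>0$ such that $(X,\Delta+\epsilon D)$ is globally $+$-regular. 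Now apply \autoref{prop:flip_G+R} to the pair $(X,\Delta+\epsilon D)$ and the small map $f$. This gives that $(Y,f_*\Delta+\epsilon f_*D)=(Y,f_*\Delta+\epsilon D_Y)$ is globally $+$-regular. Since $D_Y$ was arbitrary, $(Y,f_*\Delta)$ is strongly globally $+$-regular.

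The main point requiring care is the hypothesis needed to invoke \autoref{prop:flip_G+R}. As stated, that lemma is for pairs projective over an excellent DVR and requires $K+\Delta$ to be $\mathbb{Q}$-Cartier on both sides. I would avoid this by re-running its proof directly, since the argument is purely about splittings extending over big open sets. Let $U\subseteq X$ be the big open set on which $f$ is an isomorphism onto a big open set of $Y$. Any normal finite cover $W'\to Y$ is the normalization of $Y$ in $K(W')$, and it corresponds to the normalization $W\to X$ of $X$ in the same field. A splitting of $\sO_X\to \sO_W(\lfloor \Delta_W+\epsilon D_W\rfloor)$ restricts to $U$. Because both sheaves are reflexive ($S_2$) on the normal scheme $Y$ and the complement of $f(U)$ has codimension at least two, this splitting extends uniquely to a splitting for $W'\to Y$ with boundary $f_*\Delta+\epsilon D_Y$. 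The same reflexivity argument handles the passage from Weil to $\mathbb{Q}$-Cartier boundaries. Thus the $\mathbb{Q}$-Cartier requirement in the definition of a pair is the only subtlety, and it is inherited from whatever convention makes $(Y,f_*\Delta+\epsilon D_Y)$ a pair in the first place.
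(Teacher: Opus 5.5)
Your proposal is correct and is essentially the paper's argument. Both proofs restrict splittings to the big open set where $f$ is an isomorphism, identify perturbing divisors on $X$ and $Y$ via strict transform, and extend the splittings across the codimension-two complement using normality (reflexivity). The paper packages this as two claims: strong global $+$-regularity passes to big open subsets, using Zariski's Main Theorem to extend covers, and extends back from them. You instead fix $\epsilon$ for $f^{-1}_*D_Y$ first and then rerun the proof of \autoref{prop:flip_G+R}, which keeps it explicit that $\epsilon$ does not depend on the cover.
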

\begin{proof}
The proof is nearly identical to that of \autoref{prop:flip_G+R}, but we include the details for the sake of completeness. Let us show that for an open subset $U \subseteq X$ with $\codim(U,X) \geqslant 2$, the pair $(U, \Delta|_{U})$ is strongly globally $+$-regular. Let $f: S \rightarrow U$ be a dominant finite cover of $U$, by Zariski's Main Theorem \cite[\href{https://stacks.math.columbia.edu/tag/05K0}{Tag 05K0}]{stacks-project}, $f$ extends to a dominant finite cover of $X$, $\bar{f}: \bar{S} \rightarrow X$. Since $X$ is strongly globally $+$-regular, the natural map $\sO_X \rightarrow \bar{f}_*\sO_S(\lfloor \bar{f}^* (\Delta  + \epsilon D)\rfloor)$ splits for every effective divisor $D$. Restricting this splitting to $U$, gives a splitting of $\sO_U \rightarrow \bar{f}_*\sO_S(\lfloor \bar{f}^* (\Delta|_U  + \epsilon D|_U)\rfloor)$. Since $\codim(U,X) \geqslant 2$ and $X$ is normal, every divisor on $U$ uniquely extends to a divisor on $X$ and $(U, \Delta|_U)$ is strongly globally $+$-regular. 

Conversely, suppose that there exists $U \subseteq X$ with $\codim(U,X) \geqslant 2$, such that $(U, \Delta|_U)$ is a strongly globally $+$-regular pair.  We show that $(X, \Delta)$ is strongly globally $+$-regular. Indeed, let $g: S \rightarrow X$ be a dominant finite cover of $X$, considering the normalization of $S$, which is finite dominant over $S$, we may assume that $S$ is normal. Since $S$ is normal and $\sO_S(\lfloor g^* (\Delta  + \epsilon D)\rfloor)$ is reflexive on $S$, by \cite[Proposition 5.7.9]{EGA_IV_II}, $g_*\sO_S(\lfloor g^* (\Delta  + \epsilon D)\rfloor)$ is reflexive as well. Therefore, for any effective divisor $D|_U$ on $U$, a splitting of $\sO_U \rightarrow g_*\sO_S(\lfloor g^* (\Delta|_U  + \epsilon D|_U)\rfloor)$ extends to a splitting of $\sO_X \rightarrow g_*\sO_S(\lfloor g^* (\Delta  + \epsilon D)\rfloor)$ and $(X,\Delta)$ is strongly globally $+$-regular. 

Now, applying the first claim to the biggest open subset $V \subseteq X$ over which $f$ is an isomorphism and the second claim to $f(V) \subseteq Y$, shows that $(Y, f_*\Delta)$ is strongly globally $+$-regular. 
\end{proof}

\begin{proposition}\label{prop:crepant_strong_g+r}
    Let $(X,\Delta)$ be a strongly globally $+$-regular pair and $\pi:Y\to X$ a projective birational morphism.  Let $\pi^*(K_X+\Delta)=K_Y+\Delta_Y$ and suppose that $\Delta_Y\geq 0$. Then $(Y,\Delta_Y)$ is strongly globally $+$-regular.
\end{proposition}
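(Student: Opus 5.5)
We reduce to \autoref{lem:crepant_g+r} by perturbing on $X$ and then pulling the perturbation back to $Y$. Fix an effective divisor $E\geq 0$ on $Y$; we must find $\epsilon>0$ such that $(Y,\Delta_Y+\epsilon E)$ is globally $+$-regular. First enlarge $E$. Since $\pi$ is projective birational and $X$ is normal, we may pick an effective Cartier divisor $D$ on $X$ whose pullback $\pi^*D$ contains $\operatorname{Supp}(E)$ together with the whole exceptional locus of $\pi$. One way is to take $D$ to be a sufficiently divisible hypersurface containing $\pi(\operatorname{Supp} E)$ and $\pi(\operatorname{Exc}(\pi))$, using that $\pi(\operatorname{Supp} E)$ is a proper closed subset when $E$ is a divisor. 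Choose $c>0$ with $cE\leq \pi^*D$. Global $+$-regularity is preserved under decreasing the boundary, since the map $\sO_Y\to f_*\sO_{W}(\lfloor f^*B\rfloor)$ factors through the one for a larger $B$. So it suffices to find $\delta>0$ with $(Y,\Delta_Y+\delta\pi^*D)$ globally $+$-regular, and then take $\epsilon=c\delta$.

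Next, use strong global $+$-regularity on $X$. There is $\delta>0$ such that $(X,\Delta+\delta D)$ is globally $+$-regular. Since $D$ is Cartier, $K_X+\Delta+\delta D$ is $\mathbb{Q}$-Cartier, and
\[
\pi^*(K_X+\Delta+\delta D)=K_Y+\Delta_Y+\delta\pi^*D .
\]
The boundary $\Delta_Y+\delta\pi^*D$ is effective because both summands are. Now \autoref{lem:crepant_g+r}, applied to the pair $(X,\Delta+\delta D)$ and the proper birational morphism $\pi$, shows that $(Y,\Delta_Y+\delta\pi^*D)$ is globally $+$-regular. This finishes the argument.

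The main point to check is the choice of $D$: we need $\pi^*D\geq cE$ for an arbitrary effective $E$, including exceptional components of $E$. On an integral $X$ this is straightforward. Working locally, or using an ample line bundle on $X$ when $X$ is quasi-projective over the base, we find a Cartier $D\geq 0$ whose support contains the closed set $\pi(\operatorname{Supp}E)$. Then every component of $E$ lies in $\operatorname{Supp}\pi^*D$, and a small $c$ works.

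If no global Cartier divisor of this kind exists, we instead take $D$ Weil with $\pi^*$ understood via the $\mathbb{Q}$-Cartier condition. A cleaner way around the issue is to note that we only need some effective $\mathbb{Q}$-Cartier $D$, and any effective Weil $D$ can be enlarged to an effective Cartier one in the projective setting. The other ingredients are routine: monotonicity in the boundary, and the fact that $\pi^*$ of an effective Cartier divisor is effective.
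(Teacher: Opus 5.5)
Your argument is correct and is essentially the paper's: dominate the given divisor on $Y$ by $\pi^*$ of an effective Cartier divisor on $X$, perturb on $X$ using strong global $+$-regularity, apply \autoref{lem:crepant_g+r}, and then shrink the boundary. The only difference is cosmetic: the paper gets coefficient one, using a Cartier divisor dominating the pushforward plus one whose pullback dominates the exceptional part, where you rescale by $c$. Two small corrections. First, it is the map for the larger boundary that factors through the one for the smaller boundary, not conversely. Second, like the paper you need an ample divisor on $X$ to produce the global Cartier divisor $D$; ``working locally'' is not an option, since global $+$-regularity of $(X,\Delta+\delta D)$ is a global condition.
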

\begin{proof}
     
    First, note that for any Weil divisor $D$ on $X$, we can find a Cartier divisor $H'$ such that $H' - D \geqslant 0$. Indeed, let $H$ be an ample divisor on $X$, then for some integer $n \gg 0$, there exists some effective divisor $L \geqslant 0$ such that $nH- D \sim L$, in particular $D+L$ is Cartier and $D+ L - D \geqslant 0$. It suffices to choose $H' = D + L$. 
    
    Now let $D$ be a divisor on $Y$. We may write, $D \leqslant  \pi^*D' + E$ where $E$ is $\pi$-exceptional and $D'$ is Cartier such that $D' - \pi_*D \geqslant 0$, where we use the trick in the first paragraph. Let $W$ be an effective divisor on $X$ vanishing with sufficient multiplicity on $\pi(E)$, once again we may assume that $W$ is Cartier, such that $E \leqslant \pi^*W$. In conclusion, we may write $D \leqslant \pi^*(D' + W)$ with $D' + W$ Cartier. Since $X$ is strongly globally $+$-regular, $(X, \Delta + \epsilon(D' + W))$ is globally $+$-regular for some $\epsilon > 0$ and $K_X + \Delta + \epsilon(D'+ W)$ is $\Q$-Cartier. Applying \cite[Proposition 6.28]{BMPSTWW1}, shows that $(Y, \Delta_Y + \epsilon\pi^*(D' + W))$ is globally $+$-regular which implies that $(Y, \Delta_Y+ \epsilon D)$ is globally $+$-regular. 
\end{proof}

\subsection{Examples}

\begin{proposition}
Let $(X,\Delta)$ be projective over $\mathrm{Spec}(\mathbb{Z})$.  If $(X_\mathbb{Q},\Delta_\mathbb{Q})$ is log Fano then there exists an open subset $U\subseteq \Spec(\mathbb{Z})$ such that for each closed point $P\in U$, the localization of $(X,\Delta)$ at $P$ is strongly globally $+$-regular.  Furthermore, $(X_U,\Delta_U)$ is strongly globally $+$-regular over $U$. 

\end{proposition}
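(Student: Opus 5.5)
Since $(X_\mathbb{Q},\Delta_\mathbb{Q})$ is log Fano, the plan is to spread out the log Fano structure over an open $U\subseteq\Spec(\mathbb{Z})$ and then invoke the global $+$-regularity criterion for log Fano pairs over a mixed characteristic DVR with large residue characteristic from \cite[Setting 8.5, Theorem 8.25]{BMPSTWW1}. As noted above, that theorem implicitly already proves the perturbed statement. The perturbation by an arbitrary $D$ is handled by the openness of the klt and ample conditions.

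First we shrink $\Spec(\mathbb{Z})$ to an open $U$ over which several properties hold.
\begin{enumerate}
\item $X_U$ is normal and flat over $U$.
\item $K_X+\Delta$ is $\mathbb{Q}$-Cartier over $U$ and $-(K_{X_U}+\Delta_U)$ is ample over $U$.
\item There is a log resolution $\mu:W\to X_U$ of $(X_U,\Delta_U)$, obtained by spreading out one over $\mathbb{Q}$. It is smooth over $U$, with exceptional and boundary divisors forming a relative snc divisor.
\item The discrepancies of $(X_U,\Delta_U)$ along $\mu$ are those of the generic fiber, hence $>-1$.
\item The residue characteristics in $U$ exceed any bound required by \cite[Theorem 8.25]{BMPSTWW1}, for instance a bound depending on the coefficients of $\Delta$.
\end{enumerate}
This step is standard spreading out.

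Next, fix an effective divisor $D\geq 0$ on $X_U$, which we enlarge to be Cartier. Horizontal components of $D$ are controlled on the resolution. Vertical components are fibers $X_P$ over finitely many primes $P$, and since $X_P$ is Cartier (a pullback of $P$) they are included as well. For $0<\epsilon\ll 1$ we claim that $(X_U,\Delta_U+\epsilon D)$ is still klt with $-(K+\Delta_U+\epsilon D)$ ample over $U$. Ampleness is an open condition on $N^1$. For klt, we may assume that $\mu$ is also a log resolution of $D$ after a further shrink; but since $D$ is arbitrary, we instead argue via multiplier ideals. We use that klt is preserved under $\epsilon$-perturbation by any Cartier divisor, since $\mathrm{ord}_E(\mu^*D)$ is finite for each of the finitely many divisors $E$ on some log resolution of $(X_U,\Delta_U+D)$. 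Such a resolution exists by excellence in dimension where needed, or can be replaced by using the fixed $\mu$ and the finiteness of coefficients of $\mu^*D$. The worry is that $D$ may contain a full fiber $X_P$, whose pullback has coefficient $1$ on a fiber that is not an lc centre issue. Adding $\epsilon X_P$ is harmless because $X_P$ is reduced and $(X,X_P)$ is plt near $P$ when $X_P$ is normal.

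Then, for each closed point $P\in U$, apply \cite[Theorem 8.25]{BMPSTWW1} to the localization over $\mathbb{Z}_{(P)}$ of $(X,\Delta+\epsilon D)$. That theorem gives global $+$-regularity for log Fano pairs whose reduction is suitably good, for example with globally $F$-regular special fiber or with large residue characteristic in the setting there. This yields the first assertion. Concretely, the hypothesis of \cite[Setting 8.5]{BMPSTWW1} requires the fiber to be globally $F$-regular for the perturbed pair. This holds for $p\gg0$ because log Fano klt pairs in characteristic zero reduce mod $p$ to globally $F$-regular pairs \cite{SchwedeSmithLogFanoVsGloballyFRegular} (Hara's theorem), after a further shrink. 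The shrink depends on $D$, and we control this by fixing $D$ before shrinking. For the statement over $U$, a finite cover $Y\to X_U$ gives $\sO_{X_U}\to f_*\sO_Y(\lfloor f^*(\Delta+\epsilon D)\rfloor)$. Its splitting is a global section of a $\mathcal{H}om$ sheaf surjecting to $H^0(\sO)$, and surjectivity can be checked after localizing at each $P$ since $U$ is Dedekind.

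The main obstacle is that the open set $U$ must be independent of $D$, while Hara-type reduction and the bounds in \cite{BMPSTWW1} depend on the divisor. I would try to circumvent this using the deformation of $F$-splitting through the Cartier divisor $D$. Choose $U$ so that $(X_P,\Delta_P)$ is globally $F$-regular for all $P\in U$, using the strong form of \cite[Theorem 8.25]{BMPSTWW1}, which lifts global $F$-regularity of the special fiber to global $+$-regularity of the total space. For a given $D$, its restriction $D|_{X_P}$ is an effective Cartier divisor for all but finitely many $P$. Global $F$-regularity then gives $\epsilon_P$, and we would need a uniform $\epsilon$, which is the delicate point. The finitely many remaining primes, where $D$ contains $X_P$ or $D|_{X_P}$ is otherwise badly behaved, are treated by hand using \cite[Corollary 6.1]{SchwedeSmithLogFanoVsGloballyFRegular}.
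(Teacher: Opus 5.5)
\textbf{Verdict: genuine gap.} The first assertion is fine modulo the lifting step discussed below, but the ``Furthermore'' assertion is left unproved at exactly the point you flag.

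\textbf{The missing uniformity.} Your outline matches the paper's:
\begin{enumerate}
\item choose $U$ via \cite[Theorem 1.2]{SchwedeSmithLogFanoVsGloballyFRegular}, so that every $(X_P,\Delta_P)$ with $P\in U$ is globally $F$-regular and $-(K_X+\Delta)$ is ample over $U$;
\item for a given $D$, take $\epsilon$ small enough that $-(K_X+\Delta+\epsilon D)$ stays ample and, by Cor.~6.1 there, the perturbed fibre stays globally $F$-regular;
\item lift from the fibre to the localization $\widetilde X$;
\item glue over the affine base by checking surjectivity of $\operatorname{Hom}(f_*\mathcal{O}_Y(\lfloor f^*(\Delta+\epsilon D)\rfloor),\mathcal{O}_X)\to H^0(X,\mathcal{O}_X)$ at every closed point.
\end{enumerate}
The ``Furthermore'' assertion says that a single $\epsilon$ works at all $P\in U$ at once. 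The gluing step needs exactly this, for every finite cover $f$ simultaneously. You stop there, calling it ``the delicate point''.

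The fix combines two things you already mention. Fix $D$ and choose $\epsilon_0>0$ with $(X_{\mathbb Q},\Delta_{\mathbb Q}+\epsilon_0 D_{\mathbb Q})$ still log Fano. Klt-ness is only needed over $\mathbb{Q}$, where a log resolution of $(X_{\mathbb Q},\Delta_{\mathbb Q}+D_{\mathbb Q})$ exists. Schwede--Smith applied to \emph{this} pair gives global $F$-regularity of $(X_P,\Delta_P+\epsilon_0 D_P)$ for all but finitely many $P\in U$. The $D$-dependence of this shrink is harmless: $U$ was already fixed by the unperturbed pair. For the finitely many discarded primes $P_1,\dots,P_r$, Cor.~6.1 gives $\epsilon_{P_i}$. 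Then $\epsilon=\min(\epsilon_0,\epsilon_{P_1},\dots,\epsilon_{P_r})$, also taken small for ampleness, works everywhere, because shrinking the coefficient preserves global $F$-regularity. This is the uniformity that the paper's Noetherian-induction sentence supplies.

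Cor.~6.1 cannot handle primes with $X_P\subseteq\operatorname{Supp}D$, since $D|_{X_P}$ is undefined there. So that part of your plan does not work as stated.

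\textbf{The lifting step.} You outsource it to \cite[Setting 8.5, Theorem 8.25]{BMPSTWW1}, with content you are guessing. The paper proves it directly, and you should too:
\begin{enumerate}
\item Since $-(K_X+\Delta+\epsilon D)$ is ample, \cite[Theorem 7.2]{BMPSTWW1} gives a surjection $B^0_{X_P}(\widetilde X,\widetilde\Delta+\epsilon\widetilde D+X_P;\mathcal{O}_X)\twoheadrightarrow B^0(X_P,\operatorname{Diff}_{X_P}(\Delta+\epsilon D);\mathcal{O}_{X_P})$.
\item The target is all of $H^0(X_P,\mathcal{O}_{X_P})$, because the perturbed fibre is globally $F$-regular and hence globally $+$-regular by \cite[Lemma 6.14]{BMPSTWW1}.
\item Nakayama's lemma then gives $B^0_{X_P}=H^0(\widetilde X,\mathcal{O}_{\widetilde X})$.
\item \cite[Lemma 4.26]{BMPSTWW1} gives $B^0(\widetilde X,\widetilde\Delta+\epsilon\widetilde D;\mathcal{O})\supseteq B^0_{X_P}$, so $(\widetilde X,\widetilde\Delta+\epsilon\widetilde D)$ is globally $+$-regular.
\end{enumerate}

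\textbf{The klt paragraph.} Drop your paragraph on klt-ness of $(X_U,\Delta_U+\epsilon D)$. Klt-ness of the total space plays no role in the argument, and the reasoning offered is not valid. Log resolutions of arithmetic schemes are not available in arbitrary dimension. A fixed log resolution $\mu$ of $(X_U,\Delta_U)$ that is not a log resolution of $D$ says nothing about the discrepancies of $(X_U,\Delta_U+\epsilon D)$.
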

\begin{proof}
By \cite[Theorem 1.2]{SchwedeSmithLogFanoVsGloballyFRegular}, there exists an open subset $U\subset \mathbb{Z}$ such that for every $P\in U$, $(X_P,\Delta_P)$ is globally $F$-regular.  By shrinking $U$ further, we may assume that $-(K_X+\Delta)$ is ample over $U$.  This is the $U$ over which we claim that the restriction is strongly globally $+$-regular.  For a fixed effective divisor $D$, take $\epsilon$ sufficiently small that $-(K_X+\Delta+\epsilon D)$ is still ample over $U$.  Denote by $\widetilde{X}$ the localization at a fixed $P\in U$.  

By \cite[Theorem 7.2]{BMPSTWW1}, we have 
\[B^0_{X_p}(\widetilde{X},\widetilde{\Delta}+\epsilon \widetilde{D}+X_P;\sO_X)\twoheadrightarrow B^0(X_P,\Diff_{X_P}(\Delta+\epsilon D);\sO_{X_P}),\] where $B^0$ and $B^0_{X_p}$ denote the linear system of $+$-stable sections and its adjoint analog from \cite{BMPSTWW1}. 

Now since $(X_P,\Diff_{X_P}(\Delta+\epsilon D))$ is globally $+$-regular for $0< \epsilon\ll1$, we have $$B^0(X_P,\Diff_{X_P}(\Delta+\epsilon D);\sO_{X_P})=H^0(X_P,\sO_{X_P}).$$
Therefore we have 
$$B^0_{X_P}(\widetilde{X},\widetilde{\Delta}+\epsilon \widetilde{D}+X_P;\sO_X)=H^0(X,\sO_X)$$ by Nakayama's lemma.
But since we have $$B^0(\widetilde{X},\widetilde{\Delta}+\epsilon \widetilde{D};\sO_X)\supseteq B^0_{X_P}(\widetilde{X},\widetilde{\Delta}+\epsilon \widetilde{D}+X_P;\sO_X)$$ by \cite[Lemma 4.26]{BMPSTWW1}, we deduce that $(\widetilde{X},\widetilde{\Delta}+\epsilon \widetilde{D})$ is globally $+$-regular as required. 

To see the last part, note that for a finite cover $f : Y \rightarrow X$, the natural morphism $\sO_X \rightarrow f_*\sO_Y(\lfloor f^*(\Delta + \epsilon D)\rfloor)$ splits if and only if the evaluation map 
\[
    \Hom(f_*\sO_Y(\lfloor f^*(\Delta + \epsilon D)\rfloor), \sO_X) \rightarrow \sO_X(X)
\] 
is surjective. Denote $g:X\to \Spec(Z)$.  Now, localizing the above map at a fixed $P \in U$, we get, 
\begin{equation}\label{eq:locEvaluationMap}
    \Hom(\widetilde{f}_*\sO_{\widetilde{Y}}(\lfloor \widetilde{f}^*(\widetilde{\Delta} + \epsilon \widetilde{D})\rfloor), \sO_{\widetilde{X}}) \rightarrow \sO_{\widetilde{X}}(\widetilde{X}),
\end{equation}
where $\widetilde{Y}$ is the localization of $Y$ and $\widetilde{f} : \widetilde{Y} \rightarrow \widetilde{X}$ is the induced finite cover. 
As $\widetilde{X}$ is strongly globally $+$-regular, the evaluation map \autoref{eq:locEvaluationMap} is surjective for $\epsilon$ sufficiently small.  Therefore for this value of $\epsilon$, the sheaf map 
\[\mathscr{H}om(\widetilde{f}_*\sO_{\widetilde{Y}}(\lfloor \widetilde{f}^*(\widetilde{\Delta} + \epsilon \widetilde{D})\rfloor), \sO_{\widetilde{X}}) \rightarrow g_*\sO_X\]
is surjective in some neighbourhood of $P$.  By Noetherian induction, we can choose $\epsilon$ small enough that this sheaf map is surjective everywhere.  But now since $U$ is affine, this gives the required surjectivity.  This shows that $(X_U, \Delta_U)$ is strongly globally $+$-regular. 
\end{proof}

Conjecturally, the converse of this proposition holds:
\begin{conjecture}
Let $(X,\Delta)$ be a strongly globally $+$-regular variety, projective over an open subset of $\mathrm{Spec}(\mathbb{Z})$.  Then $(X,\Delta)$ (and hence $(X_{\mathbb{Q}},\Delta_{\mathbb{Q}})$) is of log Fano type.
\end{conjecture}

However, the existence of the boundary that would make $(X,\Delta)$ log Fano is major open question. Of course, one should ask the same question for globally $+$-regular varieties as well \cite[Conjecture 6.17]{BMPSTWW1}, as well as whether the addition of a perturbation into the definition of globally $+$-regular varieties is ultimately needed.

\begin{question}
    If $(X,\Delta)$ is a globally $+$-regular variety, then is $(X,\Delta)$ necessarily also strongly globally $+$-regular?
\end{question}

\section{Rationally chain connectedness of globally $+$-regular varieties in mixed characteristic}

We first show that, under a mild assumption that the closed points of the variety are dense, 
strongly globally $+$-regular varieties in mixed characteristic are rationally chain connected, and
the non-vanishing conjecture over $\C$ implies the rational chain connectedness of globally $+$-regular varieties in mixed characteristic  (see Theorem \ref{main_thm_dense}).

\begin{setting}\label{setting} We fix $Z$ to be an integral normal excellent scheme with a dualizing complex. Assume that $Z$ is of mixed characteristic and has dense closed points.
\end{setting}

We need two lemmas before we prove the main result of this section.

\begin{lemma}\label{top_cohomology_vanish} Let $g:X\to Z$ be a surjective and projective morphism.  
Assume one of the following conditions hold.

\noindent (1) $X$ is strongly globally $+$-regular scheme over $Z$, and $\cF$ is a $\Q$-Cartier pseudo-effective Weil divisorial sheaf on $X$.

\noindent (2) $X$ is globally $+$-regular scheme over $Z$, and $\cF$ is a $\Q$-Cartier Weil divisorial sheaf on $X$ with $\kappa(X,\cF)\geq 0$.

\noindent Write $X_\eta$ for the generic fibre of $g$, and $\cF_\eta$ for the base change of $\cF$ to $X_\eta$. Then $H^d(X_\eta,\cF_\eta)=0$, where $d:=\mathrm{dim}\ X_\eta$.

\begin{proof}
 If we are in (1), after shrinking $Z$ if necessary, we can take $A$ to be an ample divisor on $X$. Then there exists a rational $\epsilon>0$ such that $(X,\epsilon A)$ is globally $+$-regular. We write $\cF\sim \cO_X(G)$, where $G$ is a $\Q$-Cartier Weil divisor. Since $\cF$ is pseudo-effective, the $\Q$-Cartier $\Q$-divisor $G+\epsilon A$ is $\Q$-linearly equivalent to an effective $\Q$-Cartier $\Q$-divisor.
By covering trick, we can take a finite cover $f_1:Y_1\to X$ such that $Y_1$ is normal, $f_1^\ast(\epsilon A)$ is integral and 
$$\big(f_1^\ast\cF\otimes\cO_{Y_1}(f_1^\ast(\epsilon A))\big)^{\ast\ast}\sim \cO_{Y_1}(D)$$
where $D$ is an effective Cartier divisor on $Y_1$.
If we are in (2), by covering trick, we can take a finite cover $f_1:Y_1\to X$ such that $Y_1$ is normal and $(f_1^{\ast}\cF)^{**}\sim \cO_{Y_1}(D)$, where $D$ is an effective Cartier divisor on $Y_1$.

Write $g_1:=f_1\circ g$. 
Since $Z$ has dense closed points, we can choose $z\in Z$ to be a closed point such that $D$ doesn't contain any component of $g_1^{-1}(z)$. By flat base change for cohomology, we can replace $Z$, $X$ and $Y_1$ by their localizations at $z$. 
Moreover, we can assume that $\cO_X,\cF,\cO_{Y_1}$ are flat over $Z$, and $R^dg_\ast\cO_X, R^dg_\ast\cF, R^dg_{1,\ast}\cO_{Y_1}$ are locally free. We denote the residue characteristic of $Z$ by $p$, and denote the restriction to the fibre over $z$ by $\cdot_{z=0}$. Then we have the exact sequences
$$0\to \cO_{Y_1,z=0}\to \cO_{Y_1,z=0}(D)\to \cQ:=\cO_{Y_1,z=0}(D)/\cO_{Y_1,z=0}\to 0,$$
and 
$$H^d(\cO_{Y_1,z=0})\to H^d(\cO_{Y_1,z=0}(D))\to H^d(\cQ).$$
Note that $\cQ$ is supported on a closed subscheme of $Y_{1,z=0}$ with the dimension $\leq d-1$. Hence $H^d(Y_{1,z=0},\cQ)=0$. It follows that $H^d(\cO_{Y_{1,z=0}})\to H^d(\cO_{Y_{1,z=0}}(D))$ is a surjection. By killing cohomology \cite[Proposition 3.1 (a)]{BMPSTWW1}, we can take a finite cover $f_2:Y_2\to Y_1$ such that $Y_2$ is normal and the pullback $f_2^\ast:H^d(\cO_{Y_1,p=0})\to H^d(\cO_{Y_2,p=0})$ is the zero map. 
Since $g_1$ is flat and $R^dg_{1,\ast}\cO_{Y_1}$ is locally free, the pullback $$f_2^\ast:H^d(\cO_{Y_1,z=0})\cong H^d(\cO_{Y_1,p=0})_{z=0}\to H^d(\cO_{Y_2,p=0})_{z=0} \to H^d(\cO_{Y_2,z=0})$$ is the zero map. Note that we have surjections
$$H^d(\cO_{Y_{1,z=0}})\to H^d(\cO_{Y_{1,z=0}}(D))$$ and $$H^d(\cO_{Y_{2,z=0}})\to H^d(\cO_{Y_{2,z=0}}(D)).$$
It follows that
$$H^d(\cO_{Y_{1,z=0}}(D))\to H^d(\cO_{Y_{2,z=0}}(f_2^\ast D))$$
is the zero map. 
We write $f:=f_1\circ f_2$. Then the natural map
$$f^\ast :H^d(X_{z=0},\cF_{z=0})\to H^d(\cO_{Y_{2,z=0}}( D))$$
is the zero map.

If we are in (1), since $(X,\epsilon A)$ is globally $+$-regular, we have a splitting map
$\cO_X\to f_\ast\cO_{Y_2}(f^{\ast}(\epsilon A))$. Tensoring it with $\cF$ and taking the reflexivizations, we get a splitting map
$\cF\to f_\ast\cO_{Y_{2}}(D).$
If we are in (2), since $X$ is globally $+$-regular, we have a splitting map
$\cO_X\to f_\ast\cO_{Y_2}$. Tensoring it with $\cF$ and taking the reflexivizations, we get a splitting map $\cF\to f_\ast\cO_{Y_{2}}(D).$

It gives a splitting map
 $$\cF_{z=0}\to (f_\ast\cO_{Y_{2}}(D))_{z=0}\cong f_\ast( \cO_{Y_{2,z=0}}(D)).$$  

 Thus $H^d(X_{z=0},\cF_{z=0})=0$. Note that $H^d(X,\cF)$ is a finitely generated free $\cO_Z$-module and $$H^d(X,\cF)_{z=0}= H^d(X_{z=0},\cF_{z=0})=0.$$ 
 Hence $H^d(X,\cF)=0$ by Nakayama's lemma. By flat base change for cohomology, we have $H^d(X_\eta,\cF_\eta)=0$.
\end{proof}
\end{lemma}

\begin{lemma}\label{uniruled}Let $g:X\to Z$ be a $\Q$-Gorenstein globally $+$-regular scheme over $Z$ such that $g$ is surjective and projective. Write $X_\eta$ for the generic fibre of $g$, where $\eta$ is the generic point of $Z$. Then $\kappa(X_\eta)=-\infty$.
Moreover, assume that one of the following conditions hold.

\noindent (1) $X$ is strongly globally $+$-regular scheme over $Z$.

\noindent  (2) Conjecture \ref{non-vanishing_conjecture} holds in dimension $d=\mathrm{dim}\ X_\eta$. 

\noindent Then $K_{X_\eta}$ is not pseudo-effective. In particular, $X_\eta$ is uniruled.
\begin{proof}   If $\kappa(X_\eta)\geq 0$, by Lemma \ref{top_cohomology_vanish}, we get 
$$H^0(X_\eta,\cO_{X_\eta})= H^d(X_\eta,K_{X_\eta})^\vee=0,$$
which is impossible. Hence we have $\kappa(X_{\eta})=-\infty$.

To show that $K_{X_\eta}$ is not pseudo-effective, in Case (1) \autoref{top_cohomology_vanish} leads to a contradiction as above, and in Case (2)  non-vanishing and $\kappa(X_\eta)=-\infty$ implies that $K_{X_\eta}$ is not pseudo-effective. 

Finally, note that $X_\eta$ has klt singularities since $X$ is a $\Q$-Gorenstein globally $+$-regular scheme.
By Theorem \ref{mmp}, we can run a $K_{X_\eta}$-MMP, which terminates with a Mori fibre space $\pi_\eta:X_\eta'\to B_\eta$ since $K_{X_\eta}$ is not pseudo-effective. Therefore, $X_\eta$ is uniruled.
\end{proof}
\end{lemma}

\begin{theorem}\label{main_thm_dense} Let $g:X\to Z$ be a surjective and projective morphism. Let $\eta$ be the generic point of $Z$. Assume that one of the following conditions hold.

\noindent (1) $X$ is strongly globally $+$-regular scheme over $Z$.

\noindent (2) $X$ is globally $+$-regular scheme over $Z$, and Conjecture \ref{non-vanishing_conjecture} holds in dimension $d=\mathrm{dim}\ X_\eta$.  

\noindent Then $X$ is rationally chain connected over $Z$.
\begin{proof} 
By \autoref{prop:small_Q-fact} we can find a crepant small $\mathbb{Q}$-factorialization $\pi:Y\to X$ (after possibly shrinking $Z$).  If $\pi^*(K_X+B)=K_Y+B_Y$, we have that
$K_Y+B_Y$ is strongly globally $+$-regular by \autoref{prop:crepant_strong_g+r} if we are in Case (1), and
$K_Y+B_Y$ is globally $+$-regular by \autoref{lem:crepant_g+r} if we are in Case (2).  Furthermore, it suffices to show that $Y$ is rationally chain connected by pushing forward those curves to $X$.   So we may replace $X$ by $Y$ to assume that $B=0$ and $K_X$ is $\mathbb{Q}$-Cartier.

By Lemma \ref{uniruled}, $K_{X_\eta}$ is not pseudo-effective. Note that $X_\eta$ has klt singularities since $X$ is a $\Q$-Gorenstein globally $+$-regular scheme.
By Theorem \ref{mmp}, We can run a $K_{X_\eta}$-MMP over $Z$, which terminates with a Mori fibre space $\pi_\eta:X_\eta'\to V_\eta$. By shrinking the base $Z$, we can assume that every step of that MMP is defined over $Z$ since $Z$ has dense closed points. Then we get a map $\pi:X'\to V$ which restricts to $\pi_\eta$. Note that by in Case (1) (resp. Case (2)),  $X'$ is still strongly globally $+$-regular by \autoref{prop:image_strong_g+r} and \autoref{prop:small_strong_g+r}
(resp. globally $+$-regular by \autoref{lem:image_g+r} and \autoref{prop:flip_G+R}), and hence by \autoref{prop:image_strong_g+r} (resp. \autoref{lem:image_g+r}) $V$ is also strongly globally $+$-regular (resp. globally $+$-regular). By induction on dimension, we know that $V$ is rationally chain connected over $Z$. 
Now for any two closed points $x,y\in X_{\eta}$, there is a chain $\{D_i\}$ of rational curves connecting $\pi_{\eta}(x)$ to $\pi_{\eta}(y)$.  By \cite[Corollary 1.10]{HM07}, for each $i$, there is a rational curve $C_i$ in $X_{\eta}$ which surjects to $D_i$.  Since the fibers of $\pi_\eta$ are rationally chain connected by \cite[Corollary 1.4]{HM07}, we conclude that $x$ and $y$ can be connected by a chain of rational curves.  Thus $X_{\eta}$ is rationally chain connected.  The statement follow from \autoref{lem:rational_chain_closed}.
\end{proof}
\end{theorem}

\begin{corollary}
    Let $g:X\to Z$ be a globally $+$-regular scheme over $Z$, such that $\mathrm{dim}\ X\leq 4$ and $g$ is surjective and projective.  Then $X$ is rationally chain connected over $Z$.
\end{corollary}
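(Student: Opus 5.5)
This corollary should follow from case (2) of \autoref{main_thm_dense}, once we know that Conjecture \ref{non-vanishing_conjecture} holds in the relevant dimension. Since $g$ is surjective and $Z$ is of mixed characteristic, $Z$ has positive dimension. Hence the generic fibre satisfies
\[
d=\dim X_\eta=\dim X-\dim Z\leq 3 .
\]
The non-vanishing conjecture is known for smooth projective varieties over $\C$ of dimension at most $3$. In dimension $\leq 2$ this follows from the classification of surfaces and abundance for surfaces. In dimension $3$ it follows from the three-dimensional minimal model program together with abundance for threefolds (Miyaoka, Kawamata, Koll\'ar et al.): if $K_X$ is pseudo-effective, a minimal model exists, and $\kappa\geq 0$ there.

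There is one subtlety. In \autoref{main_thm_dense} the conjecture enters only through \autoref{uniruled}, case (2), and it is applied to the generic fibre $X_\eta$, which lives over the function field of $Z$ rather than over $\C$. So I need non-vanishing for klt projective varieties over an arbitrary field of characteristic zero, in dimension $\leq 3$. I would handle this in two reductions.
\begin{enumerate}
\item Pass to a resolution: pseudo-effectivity of $K$ is a birational invariant for klt varieties after adding exceptional divisors with nonnegative coefficients, and the same holds for $\kappa$.
\item Pass to $\C$: base change to an algebraic closure, use the Lefschetz principle, and invoke \cite[Subsection 2.3]{GNT19} as in \autoref{mmp}. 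Pseudo-effectivity and Iitaka dimension are invariant under field extension.
\end{enumerate}
With these, the conjecture holds in the needed form for $d\leq 3$.

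Putting this together, the hypotheses of \autoref{main_thm_dense}(2) are satisfied, and the corollary follows. The main care point is the reduction in the previous paragraph, but it is standard. I should also check that the induction inside \autoref{main_thm_dense}, applied to the base $V$ of the Mori fibre space, only ever needs the conjecture in dimensions $<d$. It does, and those dimensions are also known.
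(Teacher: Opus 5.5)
Your proposal is correct and is the intended argument: the paper gives no separate proof, and the corollary is case (2) of \autoref{main_thm_dense} with $d=\dim X_\eta\le 3$ (since $\dim Z\ge 1$), where non-vanishing is known; the induction on the base $V$ only needs smaller dimensions.

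One correction to your reduction (1), a point the paper also passes over. Let $f\colon\tilde X\to X_\eta$ be a resolution with $K_{\tilde X}=f^*K_{X_\eta}+\sum a_iE_i$. It is $K_{\tilde X}+\sum_{a_i<0}(-a_i)E_i$, not $K_{\tilde X}$, that has the same pseudo-effectivity and Iitaka dimension as $K_{X_\eta}$. The divisor $K_{\tilde X}$ alone can fail to be pseudo-effective. For example, let $S$ be the quotient of $C\times C$ by the diagonal $\mu_3$-action, with $C$ the elliptic curve with $j=0$. Then $S$ is klt with $K_S\sim_{\mathbb{Q}}0$, but its minimal resolution $h\colon\tilde S\to S$ has $K_{\tilde S}=h^*K_S-\tfrac13\sum_{i=1}^{9}E_i$. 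So what your reduction actually requires is log non-vanishing for klt pairs in dimension $\le 3$, not \autoref{non-vanishing_conjecture} for smooth varieties as stated. This is also known (Miyaoka, Kawamata, Keel--Matsuki--McKernan), and with that substitution your argument goes through.
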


It is clear that the same argument implies that if the following conjecture holds, then one can drop the assumption of non-vanishing conjecture in (2) of Theorem \ref{main_thm_dense}.

\begin{conjecture} Let $X$ be a klt projective variety over $\C$. If $H^i(X,L)=0$ for $i>0$ and any semi-ample line bundle on $X$, then $K_X$ is not pseudo-effective.
\end{conjecture}

Using similar arguments, we can use the mixed characteristic LMMP to show that rational connectedness holds when the localization at just one fiber is globally $+$-regular if the dimension is at most three and the residue characteristic $p>5$.

\begin{theorem}
	Let $(X,\Delta)$ be a globally $+$-regular pair, surjective and projective over $V=\Spec(R)$  where $R$ is a DVR of mixed characteristic with residue characteristic $p>5$.
    Suppose 
    $\dim(X)\leq 3$
    Then $X$ is rationally chain connected over $V$.  
\end{theorem}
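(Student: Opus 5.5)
We prove that the generic fibre $X_K$, where $K=\mathrm{Frac}(R)$, is rationally chain connected, and then conclude by \autoref{lem:rational_chain_closed}. We may assume $X\to V$ has connected fibres by Stein factorisation, and that $X$ is integral and flat over $V$, so it is equidimensional. First we reduce to the $\mathbb{Q}$-factorial case. By \cite{BMPSTWW1}, $(X,\Delta)$ is klt, and the three-dimensional mixed characteristic MMP of \cite{BMPSTWW1} (residue characteristic $p>5$) provides a small $\mathbb{Q}$-factorialization $\pi:Y\to X$ over $V$. This is done directly over the DVR, with no need for \autoref{prop:small_Q-fact} or any shrinking. Writing $\pi^*(K_X+\Delta)=K_Y+\Delta_Y$ with $\Delta_Y=\pi^{-1}_*\Delta\geq 0$, \autoref{lem:crepant_g+r} shows that $(Y,\Delta_Y)$ is globally $+$-regular. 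Rational chains on the fibres of $Y$ push forward to rational chains on the fibres of $X$, so we may replace $X$ by $Y$.

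Next we show that $K_X+\Delta$ is not pseudo-effective over $V$. Here I would use global $+$-regularity of the whole pair rather than \autoref{uniruled}, because non-vanishing is not available. Suppose $K_X+\Delta$ were pseudo-effective over $V$. Run a $(K_X+\Delta)$-MMP over $V$ using \cite{BMPSTWW1}; it terminates with a minimal model $X^m$. By the mixed characteristic abundance/basepoint-free results of \cite{BMPSTWW1} for three-dimensional klt pairs, $K_{X^m}+\Delta^m$ is semiample over $V$. This gives $\kappa\geq 0$ for $K_X+\Delta$ on the generic fibre.

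The pair $(X^m,\Delta^m)$ is still globally $+$-regular. Divisorial contractions are handled by the pushforward result \cite[Proposition 6.19]{BMPSTWW1}, and flips by \autoref{prop:flip_G+R}. Now apply \autoref{top_cohomology_vanish}(2), in its pair version, taking $\cF=\sO(m(K+\Delta^m))$ for suitable $m$ and localising at the closed point of $V$. The same splitting argument, using killing of top cohomology on the special fibre, gives
\[
H^d\bigl(X^m_K,\sO(K_{X^m_K}+\lceil\cdot\rceil)\bigr)=0.
\]
Serre duality then contradicts $H^0(\sO_{X_K})\neq 0$, exactly as in \autoref{uniruled}.

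Hence $K_X+\Delta$ is not pseudo-effective over $V$, and the MMP of \cite{BMPSTWW1} ends with a Mori fibre space $X'\to W$ over $V$, with $\dim W<\dim X$. Each step preserves global $+$-regularity: contractions by \cite[Proposition 6.19]{BMPSTWW1}, flips by \autoref{prop:flip_G+R}, and the base $W$ by \autoref{lem:image_g+r} or its pair version. By induction on dimension, $W$ is rationally chain connected over $V$. The case $\dim W_K=0$ is trivial, and curves over a field can be handled directly.

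On generic fibres, $X'_K\to W_K$ is a klt Mori fibre space in characteristic zero. Its fibres are rationally chain connected by \cite[Corollary 1.4]{HM07}, and rational curves in $W_K$ lift to rational curves in $X'_K$ by \cite[Corollary 1.10]{HM07}. Therefore $X'_K$ is rationally chain connected. By \cite[Corollary 1.5]{HM07}, $X_K$, which is birational to $X'_K$ through klt steps, is also rationally chain connected. \autoref{lem:rational_chain_closed} then gives rational chain connectedness of every fibre.

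The main obstacle is the non-pseudo-effectivity step. It requires abundance over a DVR in mixed characteristic in dimension three, together with a version of \autoref{top_cohomology_vanish} adapted to the pair $(X,\Delta)$ with a non-reduced boundary and a single closed point. I would first try to use the semiampleness statements of \cite{BMPSTWW1}. If those are insufficient, I would instead use the weaker fact that a pseudo-effective $K_X+\Delta$ on a $3$-fold over $V$ has a minimal model whose generic fibre satisfies $\kappa\geq 0$ by characteristic zero abundance in dimension at most two plus nonvanishing in dimension three.
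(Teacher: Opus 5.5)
Your architecture matches the paper's. You $\mathbb{Q}$-factorialize, run the MMP of \cite{BMPSTWW1}, rule out a minimal model using abundance, killing cohomology and the splitting, then induct on the base of the Mori fibre space and finish with \cite{HM07} and \autoref{lem:rational_chain_closed}. However, the step you flag as the main obstacle does not, as written, give a contradiction.

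If you feed $\cF=\sO(m(K+\Delta^m))$ into \autoref{top_cohomology_vanish}, you get $H^d(X^m_K,\sO(m(K+\Delta^m)))=0$. Its Serre dual is $H^0(\sO((1-m)K-m\Delta^m))$. A twist such as $K+\lceil\Delta\rceil$ has dual $H^0(\sO(-\lceil\Delta\rceil))$, which is zero anyway once $\Delta$ has a horizontal component. Neither group is $H^0(\sO_{X_K})$. The argument of \autoref{uniruled} needs $H^d(\omega_{X_K})=0$, i.e.\ $\cF=\omega_X$ itself. \autoref{top_cohomology_vanish}(2) allows that only when $\kappa(K_X)\geq 0$, and $\kappa(K_X+\Delta)\geq 0$ does not give this.

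The paper's fix is to discard $\Delta$ at the start. By \cite[Lemma 6.7]{BMPSTWW1}, $X$ itself is globally $+$-regular, and after a small $\mathbb{Q}$-factorialization $K_X$ is $\mathbb{Q}$-Cartier. One then runs the $K_X$-MMP rather than the $(K_X+\Delta)$-MMP. If you prefer to keep $\Delta$, you must take $\cF=\omega_X$ and let the twisted splitting absorb the boundary. On a finite cover $f$ where $f^*\Delta$ is integral and $f^*(K_X+\Delta)$ is a semiample Cartier divisor, tensor the split map $\sO_X\to f_*\sO_Y(f^*\Delta)$ with $\omega_X$. This makes $\omega_X$ a direct summand of $f_*\sO_Y(f^*(K_X+\Delta))$, so killing cohomology on the cover gives vanishing for $\omega_X$ itself.

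The second problem is the vanishing mechanism over a DVR. The proof of \autoref{top_cohomology_vanish} needs the effective divisor $D$ on the cover to contain no component of the fibre over the chosen closed point; the dense-closed-points hypothesis is what provides this. Over $V$ there is only one closed point. A divisor coming from a section on $X_K$ can contain some but not all components of a reducible special fibre, and then the map $H^d(\sO)\to H^d(\sO(D))$ on the special fibre need no longer be surjective.

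So your fallback, $\kappa\geq 0$ on the generic fibre from characteristic-zero abundance, is not enough. You need semiampleness over $V$, which is genuine mixed-characteristic abundance. That result is not in \cite{BMPSTWW1}; the paper cites \cite{bernasconi2024abundance}. With semiampleness you can choose $D$ avoiding the special-fibre components. Alternatively, as the paper does, you can bypass \autoref{top_cohomology_vanish} entirely:
\begin{enumerate}
\item Kill $H^i(Y_{p=0},\cdot)$ for $i>0$ of the semiample line bundle $(f^*\omega_Y)^{**}$ on a cover, using \cite[Proposition 3.1(a)]{BMPSTWW1}.
\item Use the splitting to get $H^i(Y_{p=0},\omega_Y)=0$.
\item Conclude that the finitely generated $R$-module $H^i(Y,\omega_Y)$ is $p$-divisible, hence zero, which contradicts Serre duality at $i=\dim Y-1$.
\end{enumerate}
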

\begin{proof}
   We apply induction on $\dim(X)$.  By \cite[Lemma 6.7]{BMPSTWW1}, $X$ is globally $+$-regular.  Furthermore, we may assume that it is $\mathbb{Q}$-factorial by taking a small $\mathbb{Q}$-factorialization and applying \autoref{prop:flip_G+R}.  
    By \cite{BMPSTWW1} we can run a terminating $K_X$-MMP.  The outcome $Y$ of the MMP is globally $+$-regular by \autoref{prop:flip_G+R}  and \autoref{lem:image_g+r}.  
    
    We claim that $Y$ admits the structure of Mori fiber space $\pi:Y\to Z$. Otherwise,  by \cite{bernasconi2024abundance}, $K_Y$ is semiample.  
    By the covering trick, there exists a finite cover $f:W\to Y$ such that $(f^*\omega_Y)^{**}$ is a semi-ample line bundle. Moreover, by \cite[Proposition 3.1 (a)]{BMPSTWW1}, replacing $f$ by a further cover, we may assume that
    $$f^*:H^i(Y_{p=0},\omega_Y)\to H^i(W_{p=0},(f^*\omega_Y)^{**})$$
    is a zero map for $i>0$. Since $Y$ is globally $+$-regular, we have that $f^*$ splits, and hence $H^i(Y_{p=0},\omega_Y)=0$ for $i>0$. Now we consider the exact sequence
    $$H^i(Y,\omega_Y)\overset{\cdot p}\to H^i(Y,\omega_Y)\to H^i(Y_{p=0},\omega_Y)=0.$$
    It implies that $H^i(Y,\omega_Y)$ is $p$-divisible for $i>0$. However, $H^i(Y,\omega_Y)$ is a finitely generated $R$-module. It follows that $H^i(Y,\omega_Y)=0$ for $i>0$, which is impossible by Serre duality and letting $i$ be $\mathrm{dim}\ Y-1$. Hence, the claim holds.
    
    Note that $Z$ is globally $+$-regular by \autoref{lem:image_g+r}.  Since $\dim(Z)\leq 2$,  we may therefore conclude that $Z$ is  rationally chain connected over $V$ by induction on dimension. 
    Now if $\eta$ is the generic point of $Z$, we see that $X_{\eta}$ is rationally chain connected because it is a log Fano type variety over a field of  characteristic zero.  
	Now if $\xi$ is the generic fiber of $X\to V$, we conclude that $X_{\xi}$ is rationally chain connected by  \cite[Corollary 1.5]{HM07}.  Therefore $X$ is rationally chain connected over $V$ by \autoref{lem:rational_chain_closed}.
    \end{proof}

\section{Rational connectedness of globally $+$-regular varieties in positive characteristic}

It was shown in \cite{gongyo_rational_2015} that projective globally $F$-regular varieties of dimension three over a field of positive characteristic are rationally chain connected. Adapting their proof, we show that the same result holds for globally $+$-regular pairs
\begin{proposition}
    Let $(X, \Delta)$ be a projective $+$-regular pair over an arbitrary field $k$ of characteristic $p\geqslant11$. If $\dim (X) = 3$, then $X$ is rationally chain connected over $k$. 
\end{proposition}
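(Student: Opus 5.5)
We follow the strategy of \cite{gongyo_rational_2015}, replacing Frobenius splitting arguments with the splittings of finite covers provided by global $+$-regularity. First we reduce to $k$ algebraically closed. Global $+$-regularity should be preserved under base change to $\bar k$, after passing to the normalization of a component. If needed, we instead argue directly over $k$ and note that rational chain connectedness can be checked geometrically. Since globally $+$-regular pairs are klt, we may take a small $\mathbb{Q}$-factorialization, which exists for klt threefolds in characteristic $p>5$. It remains globally $+$-regular by \autoref{lem:crepant_g+r}, so we may assume that $X$ is $\mathbb{Q}$-factorial. Pushing rational curves forward to $X$ shows it suffices to treat this model.

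Next we show that $K_X$ is not pseudo-effective. If it were, then by the threefold MMP and abundance results available for $p\geq 11$ (via the klt threefold MMP and the abundance results cited in \cite{gongyo_rational_2015}), a $K_X$-MMP would terminate in a model $Y$ with $K_Y$ semiample. The model $Y$ is still globally $+$-regular by \autoref{prop:flip_G+R} and \autoref{lem:image_g+r}, whose proofs work verbatim over a field. We then repeat the argument of the mixed characteristic theorem above. Take a finite cover $f\colon W\to Y$ making $(f^*\omega_Y)^{**}$ a semiample line bundle. Over a field of characteristic $p$ we kill cohomology using the equal characteristic version: any class in $H^i$, $i>0$, of a semiample line bundle dies on some finite cover \cite{BMPSTWW1}, which goes back to Bhatt's vanishing theorem for semiample bundles. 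Further covering, the map $H^i(Y,\omega_Y)\to H^i(W,(f^*\omega_Y)^{**})$ becomes zero for $i>0$. The splitting then forces $H^i(Y,\omega_Y)=0$ for $i>0$. For $i=\dim Y$, Serre duality gives $H^0(Y,\sO_Y)=0$, a contradiction.

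Hence the MMP ends in a Mori fibre space $\pi\colon Y\to Z$ with $\dim Z\leq 2$. The base $Z$ is globally $+$-regular by \autoref{lem:image_g+r}. We then argue by cases on $\dim Z$. If $\dim Z=0$, $Y$ is a klt Fano threefold with $\rho=1$. If $\dim Z>0$, $Z$ is a point, $\mathbb{P}^1$, or a globally $+$-regular surface. Such a surface is klt with $K_Z$ not pseudo-effective by the same argument, hence rational, so $Z$ is rationally chain connected. The general fibres of $\pi$ are Fano curves or surfaces, hence rational or rationally chain connected. To lift chains in $Z$ to $Y$ we use the result of \cite{gongyo_rational_2015} that curves in the base lift to rational curves in a Mori fibre space of a klt threefold in characteristic $p\geq 11$ (via Tsen-type arguments). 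Finally we pass from $Y$ back to $X$: rational chain connectedness is preserved under the inverse of flips and divisorial contractions of klt threefolds, since the fibres of the exceptional loci are rationally chain connected (\cite{gongyo_rational_2015}, using Hacon--M$^{\mathrm{c}}$Kernan-type results in positive characteristic).

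The main obstacle is the case $\dim Z=0$, i.e.\ klt Fano threefolds of Picard number one in characteristic $p$. Here \cite{gongyo_rational_2015} uses global $F$-regularity essentially, namely through vanishing results and the bound $p\geq 11$. We would try to show that each step there uses only vanishing of higher cohomology of semiample or nef and big twists, and derive these from the splitting of finite covers as in the second paragraph. The equal characteristic covers from \cite{BMPSTWW1} kill cohomology of nef and big line bundles on threefolds, which should suffice.
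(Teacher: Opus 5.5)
Your overall architecture is the same as the paper's: reduce to $\bar k$, $\mathbb{Q}$-factorialize, show $K_X$ is not pseudo-effective, run the MMP to a Mori fibre space, split into cases by $\dim Z$, and return to $X$ via Step 2 of \cite[Proposition 3.6]{gongyo_rational_2015}. But as written the proof is incomplete, because the case $\dim Z=0$ is never proved. You call it the main obstacle and only sketch a hope of rederiving vanishing theorems from cohomology-killing covers. In fact this case needs neither vanishing nor global $+$-regularity, so that plan is aimed at the wrong place.

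Over an uncountable algebraically closed field the argument is short:
\begin{itemize}
\item $Y$ is $\mathbb{Q}$-factorial with $\rho(Y)=1$ and $-K_Y$ ample.
\item General complete intersection curves avoid the codimension-two singular locus and are $K_Y$-negative, so $Y$ is uniruled by Miyaoka--Mori bend-and-break.
\item Suppose the rcc fibration $Y\dashrightarrow W$ had $\dim W>0$. The closure of the preimage of a general hyperplane section of $W$ would be a nonzero effective $\mathbb{Q}$-Cartier divisor, hence ample since $\rho(Y)=1$. Yet it would be disjoint from the rational curves in a general fibre, which is absurd.
\end{itemize}
This is \cite[Proposition 3.4]{gongyo_rational_2015}, which the paper quotes directly. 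It needs $k$ uncountable, so your reduction must also pass to an uncountable algebraically closed field, using \cite[Proposition 5.14, Corollary 5.15]{KV23} and \cite[IV.3.2.5]{KollarRationalCurves}. Your fallback of arguing directly over $k$ is not a real alternative. The case analysis you need (a globally $+$-regular curve being $\mathbb{P}^1$, rationality of the base surface, the fibration theorems) takes place over algebraically closed fields.

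Several other steps need repair.
\begin{itemize}
\item ``Klt with $K_Z$ not pseudo-effective, hence rational'' is false for surfaces, e.g.\ $\mathbb{P}^1\times E$. You must use global $+$-regularity again: the base of a Mori fibre space of $Z$ is a globally $+$-regular curve, hence $\mathbb{P}^1$.
\item Your lifting step appeals loosely to \cite{gongyo_rational_2015} for klt Mori fibre spaces. The paper instead first replaces $X$ by a $\mathbb{Q}$-factorial terminalization \cite[Theorem 1.7]{Birkar16}, which is globally $+$-regular by \autoref{lem:crepant_g+r}, and works with a terminal Mori fibre space. For $\dim Z=1$ it uses \cite[Corollary 1.7]{patakfalvisingularities}, which is where $p\geq 11$ enters. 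For $\dim Z=2$ it uses \cite[Proposition 4.8]{gongyo_rational_2015}, after a resolution that does not change the geometric generic fibre.
\item Your proof that $K_X$ is not pseudo-effective, via MMP, abundance and Bhatt's vanishing, is heavier than needed. It rests on abundance for klt threefolds, which \cite{gongyo_rational_2015} does not supply. Only $\kappa\geq 0$ is needed for the contradiction. The paper simply combines $\kappa(X)=-\infty$ \cite[Theorem 9.1]{KV23} with nonvanishing \cite[Theorem 1.1]{XuLeiNonvanishing}.
\end{itemize}
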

\begin{proof}
   By combining \cite[Proposition 5.14, Corollary 5.15]{KV23} and \cite[Chapter \MakeUppercase{\romannumeral 4}; 3.2.5 Exercise]{KollarRationalCurves}, we may assume that $k$ is uncountable and algebraically closed. Note that \cite[Proposition 5.14]{KV23} is stated only for splinters but the same proof works for globally $+$-regular pairs after changing the application of \cite[Lemma 5.3]{KV23} with \cite[Lemma 5.13]{KV23}. As $\dim(X) = 3$, there exists a small $\Q$-factorialization $Z \rightarrow X$ with $Z$ globally $+$-regular by \autoref{prop:flip_G+R}. $Z$ is klt by \cite{BMPSTWW1} and thus admits a crepant $\Q$-factorial terminalization $Z' \rightarrow Z$ by \cite[Theorem 1.7.]{Birkar16}. Now $Z'$ is globally $+$-regular by \autoref{lem:crepant_g+r} and it suffices to show that $Z'$ is rationally chain connected by \cite[Chapter \MakeUppercase{\romannumeral 4}; 3.3 Proposition]{KollarRationalCurves}. In conclusion, we may assume that $X$ is terminal $\Q$-factorial over an uncountable algebraically closed field. 

    By \cite[Theorem 9.1]{KV23}, the Kodaira dimension of $X$ is $\kappa(X) = -\infty$ and $K_X$ is not pseudo-effective by \cite[Theorem 1.1]{XuLeiNonvanishing}. Running a terminating $K_X$-MMP, see for instance \cite[Theorem 3.1]{gongyo_rational_2015}, we obtain a Mori fiber space structure $\pi: X' \rightarrow Y$ where $X'$ is a $\Q$-factorial terminal threefold. Note that both $X'$ and $Y$ are globally $+$-regular. The discussion included in Step 2 of the proof of \cite[Proposition 3.6]{gongyo_rational_2015} shows that $X$ is rationally chain connected if and only if so is $X'$. Therefore we may assume that $X$ admits a Mori fiber space structure $f : X \rightarrow Y$. To conclude the proof, we argue depending on the dimension of $Y$. \newline \\
    \textbf{Case 1.} Assume $\dim (Y) = 0$. Then the Picard rank of $X$ is $\rho(X) = 1$. In this case $-K_X$ is ample. Indeed, $-K_X$ is effective by \cite[Lemma 9.2]{KV23} and it is linearly non-trivial as $\kappa(X) = -\infty$. We conclude by \cite[Proposition 3.4]{gongyo_rational_2015} that $X$ is rationally chain connected. \newline \\
    \textbf{Case 2.} Assume $\dim(Y) = 1$. In this case $Y$ is a globally $+$-regular  curve over an algebraically closed field therefore it is isomorphic to $\mathbb{P}^1$. It follows from \cite[Corollary 1.7]{patakfalvisingularities} that $X$ is separably rationally connected. \newline \\
    \textbf{Case 3.} Assume $\dim(Y) = 2$. Let $K$ be the residue field of the generic point $\eta$ of $Y$ and note that by \cite[Lemma 4.3]{KV23}, the generic fiber $X_{\eta}$ of $f$ is globally $+$-regular. As before $X_{\bar{\eta}}$, the base change of $X_{\eta}$ to $\overline{K}$, is globally $+$-regular and in particular it is normal. As $X_{\bar{\eta}}$ is one dimensional over $\Spec(\overline{K})$ this is equivalent to $X_{\bar{\eta}}$ being regular. Since $-K_X$ is $f$-ample, the generic fiber is a projective line. As above, $Y$ is a globally $+$-regular surface so it is rational, in particular it is separably rationally connected. Now take a resolution of $X' \rightarrow X$ that is an isomorphism outside of the singular locus of $X$. Since threefold terminal singularities are isolated, see \cite[Corollary 2.30]{KollarKovacsSingularitiesBook}, the geometric generic fibers of $X' \rightarrow Y$ and $f$ agree. 
    Now, to apply \cite[Proposition 4.8]{gongyo_rational_2015}, take a resolution $r :Y' \rightarrow Y $ of $Y$ and pullback $X' \rightarrow Y$ along $r$ to get a morphism $Z \rightarrow Y'$. Let $Z'$ be an irreducible component of $Z$ dominating $X'$. Take a resolution $X'' \rightarrow Z'$ which is an isomorphism over the non-singular locus. We obtain the following diagram,
    \[\begin{tikzcd}
	{X''} & {X'} \\
	{Y'} & Y
	\arrow[from=1-1, to=1-2]
	\arrow[from=1-1, to=2-1]
	\arrow[from=1-2, to=2-2]
	\arrow[from=2-1, to=2-2]
    \end{tikzcd}\]
    where the horizontal arrows are birational morphisms and $X''$ and $Y'$ smooth. Note that since the generic fiber of $X' \rightarrow Y$ is smooth, the generic fibers of $X'' \rightarrow Y'$ and $X' \rightarrow Y$ coincide.  
    Applying \cite[Proposition 4.8]{gongyo_rational_2015} to $X' \rightarrow Y$ shows that $X''$ is separably rationally connected which implies that $X'$, and thus $X$, is separably rationally connected. 
 \end{proof} 
\begin{remark}
    The above proof shows that if the base of the fibration $f: X \rightarrow Y$ is positive dimensional then $X$ is separably rationally connected. Moreover, the proof works over a field of characteristic $p > 5$, except for the second case where the base is one dimensional. In this case, the result we cite from \cite{patakfalvisingularities} applies for $p \geqslant 11$. 
\end{remark}

\begin{corollary}
	Let $(X,\Delta)$ be a projective globally $+$-regular pair surjective over $V$  where $V$ is a positive characteristic excellent with dualizing complex and residue characteristic $p\geq 11$, and $\dim(X)-\dim(V)\leq 3$.
    Then $X$ is rationally chain connected over $V$.  
\end{corollary}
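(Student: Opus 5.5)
The plan is to reduce to the field case treated in the preceding proposition by passing to fibers over points of $V$. Rational chain connectedness over $V$ means that every fiber $X_v$, $v\in V$, is rationally chain connected. \autoref{lem:rational_chain_closed} could reduce this to the generic fiber, but it requires equidimensionality, so I will instead treat each point directly.

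\textbf{Step 1: localize and restrict to a fiber.} Fix $v\in V$. I would show that $(X_v,\Delta_v)$, or at least the relevant components of the geometric fiber, is globally $+$-regular over the residue field $k(v)$.

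For the generic point this is the easy case. For $\eta$ the generic point of (a component of) $V$, the fiber $X_\eta$ is a localization of $X$. Global $+$-regularity localizes on the base: as in \cite[Lemma 4.3]{KV23}, any finite cover of $X_\eta$ spreads out to a finite cover over an open subset of $V$, and a splitting restricts. So $X_\eta$ is a projective globally $+$-regular pair over the field $k(\eta)$, of characteristic $p\ge 11$ and dimension $\dim X-\dim V\le 3$. The preceding proposition then gives that $X_\eta$ is rationally chain connected. Lower-dimensional fibers reduce to lower-dimensional cases of the same proposition; in dimension $\le 2$, rational chain connectedness of globally $+$-regular (hence globally $F$-split type) surfaces and curves is classical.

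\textbf{Step 2: handle the special fibers.} For a non-generic point $v$ I would not restrict global $+$-regularity, which does not obviously pass to fibers. Instead, I would replace $V$ by the normalization of $\overline{\{v\}}$'s generization. Concretely: choose a DVR (or a chain of DVRs) $R\to\mathcal O_{V,v}$ dominating $v$ from a generic point, and base change $X$ to $\Spec R$, taking the main component. The generic fiber over $R$ is rationally chain connected by Step 1, since it is a field extension of $X_\eta$ and rational chain connectedness is preserved under field extension \cite[IV.3.2.5]{KollarRationalCurves}. Rational chain connectedness then specializes to the closed fiber by properness of the Chow variety of chains, i.e.\ the closedness in \cite[IV.3.5]{KollarRationalCurves}.

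The closedness statement requires working with the flat family. Over a DVR, the main component is flat and its closed fiber is contained in $X_v$ with the same dimension. The risk is that $X_v$ has extra components when $X\to V$ is not equidimensional. To handle those, I would use that $f_*\mathcal O_X=\mathcal O_V$ after Stein factorization, which is globally $+$-regular by \autoref{lem:image_g+r}, together with connectedness of fibers. Every component of $X_v$ is dominated by a component of some closed fiber of a blow-up or flattening that is again globally $+$-regular crepantly. Alternatively, one could apply the argument to $X_v$'s components by generizing within $X$.

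\textbf{Main obstacle.} The main obstacle is precisely Step 2 in the non-equidimensional case: ensuring that every component of every fiber is reached by specialization of rational chains. The first approach I would try is Raynaud–Gruson flattening, $X'\to V'$, with $X'\to X$ birational. Closedness then gives rational chain connectedness of the fibers of $X'$, and these surject onto the fibers of $X$; rational chain connectedness is preserved under images by \cite[IV.3.3]{KollarRationalCurves}, with connectedness of the fibers ensured by $f_*\mathcal O_X=\mathcal O_V$ for the Stein-factorized map. If $V$ is just the base field, the statement is exactly the preceding proposition, which serves as the base of this argument.
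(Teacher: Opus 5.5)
Your Step 1 is exactly the input the paper uses: $X_\eta$ is globally $+$-regular by localization (as in \cite[Lemma 4.3]{KV23}), hence rationally chain connected by the preceding proposition. The paper's entire proof then consists of applying \autoref{lem:rational_chain_closed}. You are right that this lemma assumes $X\to V$ is equidimensional, and that the paper never checks this. When $X\to V$ is equidimensional (for instance when $\dim V\le 1$), you should simply cite the lemma, as the paper does; your detour through DVRs only reproves it.

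\textbf{The gap.} It lies in the non-equidimensional case, which you correctly single out as the main obstacle, and your flattening argument does not close it. Raynaud--Gruson gives a blow-up $V'\to V$ and a flat strict transform $X'\to V'$ with generic fibre $X_\eta$. By \autoref{lem:rational_chain_closed}, every fibre $X'_w$ is then rationally chain connected. But $X_v$ is not the image of one such fibre; it is the image of all of $X'\times_{V'}V'_v$, where $V'_v$ is the fibre of $V'\to V$ over $v$. If $X_v$ has a component of dimension $>\dim X_\eta$, then $V'_v$ must be positive-dimensional, and that component is swept out by the images of $X'_w$ as $w$ moves in $V'_v$. So $X_v$ is a union of rationally chain connected pieces parametrized by a connected base whose rational chain connectedness you have not established. \cite[IV.3.3]{KollarRationalCurves} only concerns the image of a single rationally chain connected variety. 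Connectedness of $X_v$ gives no rational curves joining points that lie over different $w\in V'_v$. Closing this needs a genuinely new input. One option is an analogue, for morphisms out of globally $+$-regular pairs, of the Hacon--McKernan results \cite{HM07} that fibres are rationally chain connected. Another is rational chain connectedness of $V'_v$ together with a way to lift chains into $X_v$. Your remarks about crepant blow-ups and ``generizing within $X$'' do not supply either, and the paper's one-line proof does not address this case at all.

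\textbf{Two smaller points.} First, your justification in dimension $\le 2$ is wrong as stated. Globally $F$-split varieties need not be rationally chain connected (e.g.\ ordinary elliptic curves, and ordinary abelian or K3 surfaces), so you must use global $+$-regularity itself. Second, passing to the Stein factorization is not harmless. Fibres of $X\to V$ are disjoint unions of fibres over the Stein factorization, so connectedness of fibres (e.g.\ $f_*\mathcal{O}_X=\mathcal{O}_V$ for the structure map $f\colon X\to V$) must be a tacit hypothesis of the statement. It is not something Stein factorization can arrange: a degree-two cover $\mathbb{P}^1\to\mathbb{P}^1$ shows such a hypothesis is needed.
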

\begin{proof}
   This follows from \autoref{lem:rational_chain_closed}.
\end{proof}

\bibliographystyle{amsalpha}
\bibliography{MainBib.bib}

\end{document}